\newcommand{\reg}{\mathrm{reg}}
\newcommand{\sub}{\mathrm{sub}}
\newcommand{\St}{\mathrm{St}}
\newcommand{\Nov}{\mathrm{Nov}}
\newlength{\bibitemsep}
\newlength{\bibparskip}
\let\oldthebibliography\thebibliography
\renewcommand\thebibliography[1]{%
 \oldthebibliography{#1}%
 \setlength{\parskip}{\bibparskip}%
 \setlength{\itemsep}{\bibitemsep}%
}
\newenvironment{smatrix}{\left(\begin{smallmatrix}}{\end{smallmatrix}\right)}
\newcommand{\CC}{\mathbf{C}}
\newcommand{\cB}{\mathcal{B}}
\newcommand{\cS}{\mathcal{S}}
\newcommand{\cW}{\mathcal{W}}
\renewcommand{\le}{\leqslant}
\renewcommand{\ge}{\geqslant}
\newcommand{\into}{\hookrightarrow}
\DeclareMathOperator{\Hom}{Hom}
\DeclareMathOperator{\GL}{GL}
\DeclareMathOperator{\SO}{SO}
\DeclareMathOperator{\GSp}{GSp}
\DeclareMathOperator{\GSpin}{GSpin}
\newcommand{\dfour}[4]{\begin{smatrix} #1 \\ &#2 \\ &&#3 \\ &&&#4 \end{smatrix}}
\newcommand{\stbt}[4]{\begin{smatrix}#1&#2\\#3&#4\end{smatrix}}
\newcommand{\triv}{\mathbb{1}}
\newtheorem{definition}{Definition}[section]
\newtheorem{theorem}[definition]{Theorem}
\newtheorem{proposition}[definition]{Proposition}
\newtheorem{lemma}[definition]{Lemma}
\newtheorem{conjecture}{Conjecture}
\newtheorem{corollary}[theorem]{Corollary}
\newtheorem{lettertheorem}{Theorem}
\theoremstyle{remark}
\declaretheorem[name=Remark,sibling=theorem,qed={\lower-0.3ex\hbox{$\diamond$}}]{remark}
\begin{document}

\title{On local zeta-integrals for $\GSp(4)$ and $\GSp(4) \times \GL(2)$}
\author{David Loeffler}
\begin{abstract}
 We prove that Novodvorsky's definition of local $L$-factors for generic representations of $\GSp(4) \times \GL(2)$ is compatible with the local Langlands correspondence when the $\GL(2)$ representation is non-supercuspidal. We also give an interpretation in terms of Langlands parameters of the ``exceptional'' poles of the $\GSp(4) \times \GL(2)$ $L$-factor, and of the ``subregular'' poles of $\GSp(4)$ $L$-factors studied in recent work of R\"osner and Weissauer; and deduce consequences for Gan--Gross--Prasad type branching laws, either for reducible generic representations, or for irreducible but non-generic representations.
\end{abstract}

\maketitle

\section{Introduction}

 In this note, we study the local $L$-factors associated to irreducible smooth representations $\pi \times \sigma$ of the group $\GSp(4, F) \times \GL(2, F)$, where $F$ is a nonarchimedean local field of characteric 0 (corrsponding to the natural 8-dimensional representation of the $L$-group). These $L$-factors can be defined in several possible ways. Firstly, one can use the local Langlands correspondence of \cite{gantakeda11}; secondly, one can use Shahidi's method. Thirdly, supposing $\pi$ and $\sigma$ to be generic, one can use a local zeta-integral of Rankin--Selberg type introduced by Novodvorsky \cite{novodvorsky79}. It is shown in \cite{gantakeda11} that the first two constructions agree, and we shall denote the resulting $L$-factor simply by $L(\pi \times \sigma, s)$. However, it is not obvious whether the $L$-factor $L^{\Nov}(\pi \times \sigma, s)$ defined via Novodvorsky's integral agrees with $L(\pi \times \sigma, s)$.

 \begin{conjecture}
  \label{conj:compat}
  For any generic irreducible representations $\pi$ of $\GSp_4(F)$ and $\sigma$ of $\GL_2(F)$, we have $L(\pi \times \sigma, s) = L^{\Nov}(\pi \times \sigma, s)$.
 \end{conjecture}

 The Novodvorsky integral formula plays a key role in our recent work with Pilloni et al \cite{LPSZ1} on the $p$-adic interpolation of $L$-values for cuspidal automorphic representations of $\GSp_4$ and $\GSp_4 \times \GL_2$, which gives a further incentive to study \cref{conj:compat}. The conjecture is known to hold in a substantial range of cases by work of Soudry \cite{Soudry-GSpGL}, which we recall as \cref{thm:soudry} below, but many other cases still remain open.

 \subsection{Compatibility of $L$-factors}

  Our first new result is the following:

  \begin{lettertheorem}\label{thm:compat}
   \cref{conj:compat} holds under the additional assumption that the $\GL(2, F)$-representation $\sigma$ be non-super\-cuspidal.
  \end{lettertheorem}

  The case of $\sigma$ an irreducible principal series was established in \cite[Theorem 8.9(i)]{LPSZ1}, so it remains to consider the case when $\sigma$ is a special representation. Twisting $\pi$ appropriately, we can assume that $\sigma = \St$ is the Steinberg representation, and the proof in this case will be given as \cref{thm:steinberg} below.

  Since this paper was initially posted on the Mathematics ArXiv, a complementary result was proved by Yao Cheng \cite{cheng21-SO5}, showing that \cref{conj:compat} also holds if $\sigma$ is supercuspidal and $\pi$ has trivial central character (so $\pi$ factors through $\operatorname{PGSp}_4(F) \cong \SO_5(F)$). In particular, combining Cheng's result and \cref{thm:compat} of the present paper proves \cref{conj:compat}, for any $\sigma$, if the central character of $\pi$ is a square in the group of characters of $F^\times$; this is Theorem 1.3 of \cite{cheng21-SO5}. We are optimistic that combining the methods of this paper and \cite{cheng21-SO5} may lead to a complete proof of \cref{conj:compat} in the near future.

 \subsection{Exceptional poles for $\GSp(4) \times \GL(2)$}

  In the analysis of Novodvorsky's $L$-factor, an important role is played by a partition of the set of its poles into \emph{regular} and \emph{exceptional} poles (\cref{def:Novexpole}). Let $\pi$ and $\sigma$ be as in \cref{conj:compat}. One sees easily that a necessary condition for $s_0 \in \CC$ to be an exceptional pole of $L(\pi \times \sigma, s)$ is that $\chi_{\pi} \chi_{\sigma} |\cdot|^{2s_0} = 1$. We propose the following conjecture:

  \begin{conjecture}
   \label{conj:Novexpole}
   If $s_0 \in \CC$ is such that $\chi_{\pi} \chi_{\sigma} |\cdot|^{2s_0} = 1$, then $s_0$ is an exceptional pole of $L^{\Nov}(\pi \times \sigma, s)$ if and only if it is a pole of the ratio
   \[ \frac{L(\pi \times \sigma, s) L(\pi \times \sigma, s + 1)}{L(\pi \times \sigma \times \St, s + \tfrac{1}{2})}.\]
   Equivalently (by \cref{lem:langlandsfactor} below), $s_0$ is an exceptional pole if and only if the 8-dimensional Weil--Deligne representation $\phi_{\pi} \otimes \phi_{\sigma}$ has a 1-dimensional unramified direct summand whose $L$-factor has a pole at $s_0$.
  \end{conjecture}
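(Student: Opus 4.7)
The plan splits into two parts: verifying the Langlands-parameter reformulation, which is \cref{lem:langlandsfactor}, and then proving the equivalence with exceptional poles of the Novodvorsky integral.

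For \cref{lem:langlandsfactor}, I would decompose $\phi \coloneqq \phi_\pi \otimes \phi_\sigma$ into indecomposable Weil--Deligne summands of the form $\rho_i \otimes \mathrm{sp}(n_i)$ and compute the three $L$-factors appearing in the ratio on each summand. Using the isomorphism $\mathrm{sp}(n) \otimes \mathrm{sp}(2) \cong \mathrm{sp}(n+1) \oplus \mathrm{sp}(n-1)$ for $n \ge 1$, a direct bookkeeping shows that contributions to the ratio from summands with $n_i \ge 2$, or with $\rho_i$ ramified, cancel between numerator and denominator at any $s_0$ satisfying $\chi_\pi \chi_\sigma |\cdot|^{2s_0} = 1$, leaving precisely the poles coming from 1-dimensional unramified summands. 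This is essentially a routine verification, and it simultaneously explains why the Steinberg twist in the denominator is the ``right'' thing to put there.

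For the main equivalence, exceptional poles of $L^{\Nov}(\pi \times \sigma, s)$ correspond, by \cref{def:Novexpole}, to residual contributions of Novodvorsky's zeta integral after factoring out the regular part. Abstractly, such a residual contribution at $s_0$ should yield a nonzero equivariant functional relating $\pi \otimes \sigma$ to a distinguished unramified character determined by $s_0$, which on the Galois side matches a 1-dimensional unramified direct summand of $\phi_\pi \otimes \phi_\sigma$. When $\sigma$ is non-supercuspidal, Theorem A supplies the compatibility $L^{\Nov} = L$, and the verification can then proceed by case-by-case inspection of the Langlands classification (irreducible principal series via \cite{LPSZ1}, and Steinberg twists via \cref{thm:steinberg}), matching summands of $\phi_\pi \otimes \phi_\sigma$ against the known pole structure of the Novodvorsky integral.

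The main obstacle is the case of $\sigma$ supercuspidal. There Theorem A is not available and $\phi_\sigma$ is irreducible of dimension two, so a 1-dimensional unramified summand of $\phi_\pi \otimes \phi_\sigma$ forces $\phi_\pi$ to contain a 2-dimensional summand conjugate-dual to $\phi_\sigma$ up to twist -- a very rigid structural condition. One would hope to exploit this via a theta-lift interpretation, along the lines of Soudry's work \cite{Soudry-GSpGL}, to match the residue of Novodvorsky's integral with the expected unramified character. However, already isolating the exceptional residue in the supercuspidal regime is nontrivial, as the zeta integral no longer admits the convenient Mellin-type expansions available in the non-supercuspidal case; this is where the bulk of the remaining work would lie, and why I expect the full conjecture to be out of reach by purely local methods.
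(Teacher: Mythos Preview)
You are right that the statement is a conjecture, established in the paper only for non-supercuspidal $\sigma$ (this is Theorem B), and that the supercuspidal case is the genuine obstruction. Your sketch of \cref{lem:langlandsfactor} is essentially correct, modulo the paper's indexing convention (under which $\mathrm{sp}(1)$, not $\mathrm{sp}(2)$, denotes the $2$-dimensional Steinberg block) and the fact that the central-character hypothesis plays no role in that lemma.

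However, your plan for the non-supercuspidal case has a real gap. You propose to use Theorem A to identify $L^{\Nov} = L$ and then perform a ``case-by-case inspection'' matching summands of $\phi_\pi \otimes \phi_\sigma$ against ``the known pole structure of the Novodvorsky integral''. But Theorem A only controls the \emph{total} $L$-factor; it says nothing about the decomposition $L^{\Nov} = L^{\Nov}_{\reg} \cdot L^{\Nov}_{\mathrm{ex}}$, and there is no independent computation of $L^{\Nov}_{\reg}(\pi \times \sigma, s)$ in the literature for you to inspect. (Your remark about the residual equivariant functional is \cref{prop:auxzeta}, but that gives only the easy direction, exceptional pole $\Rightarrow$ nonzero Hom.) The paper's mechanism is different and specific: the formula of \cref{prop:relation} identifies Novodvorsky's integral with Piatetski-Shapiro's Bessel integral, and under this identification the restriction $\Phi_1 \in \cS_0(F^2)$ defining $L^{\Nov}_{\reg}$ becomes exactly the restriction cutting out the non-subregular part of the Piatetski-Shapiro factor. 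Thus exceptional poles of $L^{\Nov}(\pi \times \sigma, s)$ are converted into \emph{subregular} poles of $L(\pi \times \mu, s)$ for a character $\mu$ read off from $\sigma$ (\cref{prop:PScase} for principal series, \cref{cor:AimpliesB} for Steinberg). Theorem C, resting on the R\"osner--Weissauer tables, then characterizes subregular poles via direct summands of $\phi_\pi$, and this is translated back to summands of $\phi_\pi \otimes \phi_\sigma$. Your proposal bypasses this Bessel-model bridge entirely, and without it there is no available route to the regular/exceptional decomposition you need.
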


  Our second new result, whose proof is intertwined with that of \cref{thm:compat}, is the following:

  \begin{lettertheorem}\label{thm:Novexpole}
   Conjecture $\beta$ holds under the additional ssumption that $\sigma$ be non-supercuspidal.
  \end{lettertheorem}

 \subsection{Subregular poles for $\GSp(4)$}

  In order to prove Theorems A and B, we shall use a relation between Novodvorsky's zeta-integral for $\GSp(4) \times \GL(2)$ and a zeta-integral for $\GSp(4)$ studied by Piatetski-Shapiro \cite{piatetskishapiro97}, depending on a choice of (split) Bessel model of $\pi$. R\"osner and Weissauer \cite{roesnerweissauer17, roesnerweissauer18} have computed the Piatetski-Shapiro $L$-factors for all generic $\pi$, and verified that they coincide with the Langlands $L$-factors (independently of the choice of Bessel model). In their computations, an important role is played by the notion of a \emph{subregular} pole of the $\GSp(4)$ $L$-factor (see \cref{def:subreg} below). The proof of our main theorems also gives a conceptual interpretation of subregular poles, which may be of independent interest:

  \begin{lettertheorem}\label{thm:subregpole}
   Let $\pi$ be a generic irreducible representation of $\GSp(4, F)$ with central character $\chi_{\pi}$; and let $s_0 \in \CC$. Then $s_0$ is a subregular pole of $L(\pi, s)$ (for some choice of split Bessel model) if and only if one of the following two possibilities occurs:
   \begin{enumerate}
   \item $s_0$ is a pole of the ratio $\dfrac{L(\pi, s) L(\pi, s + 1)}{L(\pi \times \St, s + \tfrac{1}{2})}$; equivalently, the Langlands parameter of $\pi$ has a 1-dimensional unramified direct summand whose $L$-factor has a pole at $s_0$. In this case, we necessarily have $\chi_{\pi} |\cdot|^{2s_0 + 1} \ne 1$. \medskip
   \item $\chi_{\pi} |\cdot|^{2s_0 + 1} = 1$ and $s_0 +\tfrac{1}{2}$ is an exceptional pole of $L(\pi \times \St, s)$; equivalently, the Langlands parameter of $\pi$ has a 2-dimensional, self-dual direct summand isomorphic to an unramified twist of the Steinberg parameter, whose $L$-factor has a pole at $s_0$.
   \end{enumerate}
  \end{lettertheorem}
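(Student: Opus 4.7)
The strategy is to reduce Theorem C to Theorem B applied with $\sigma = \St$, exploiting the link between the Piatetski-Shapiro zeta-integral for a generic $\pi$ (with a fixed split Bessel model) and the Novodvorsky zeta-integral for $\pi \times \St$ alluded to in the introduction. The first step is to make this link precise: with an Iwahori-fixed Whittaker section for $\St$, Novodvorsky's unfolded integral for $\pi \times \St$ should coincide, up to a shift in $s$ and a regular nonvanishing factor, with Piatetski-Shapiro's integral for $\pi$. Consequently each subregular pole of the Piatetski-Shapiro $L$-factor at $s_0$ produces a pole of $L^{\Nov}(\pi \times \St, s_0 + \tfrac{1}{2})$, which by Theorem A equals the Langlands $L$-factor $L(\pi \times \St, s_0 + \tfrac{1}{2})$.

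Next I would match the R\"osner--Weissauer regular/subregular partition on the Piatetski-Shapiro side with the regular/exceptional partition on the Novodvorsky side. Exceptional poles of $L^{\Nov}(\pi \times \St, s)$ at $s_0 + \tfrac{1}{2}$ force the central-character condition $\chi_\pi |\cdot|^{2s_0+1} = 1$, and by Theorem B they correspond to 1-dimensional unramified direct summands of $\phi_\pi \otimes \phi_{\St}$ with a pole at that point. Using that $\phi_{\St} \otimes \phi_{\St}$ contains $\triv$ as a direct summand, and decomposing $\phi_\pi$ into indecomposable Weil--Deligne pieces, one sees that such a 1-dimensional summand can only arise when $\phi_\pi$ contains a 2-dimensional self-dual summand isomorphic to an unramified twist of the Steinberg parameter; this yields case~(ii) of Theorem C. The remaining (``Novodvorsky-regular'') poles of $L(\pi \times \St, s_0 + \tfrac{1}{2})$ have $\chi_\pi |\cdot|^{2s_0+1} \ne 1$, and the Weil--Deligne decomposition shows that those of them which are genuinely subregular on the Piatetski-Shapiro side come precisely from 1-dimensional unramified direct summands of $\phi_\pi$ itself, which is case~(i).

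The main obstacle I anticipate is the precise comparison in the first step: once the identity between the two zeta-integrals is in hand, the rest is bookkeeping of the three pole classifications via the Weil--Deligne structure of $\phi_{\St}$. The comparison itself demands a careful computation of the Whittaker model of $\St$, an explicit unfolding of the $\GSp(4) \times \GL(2)$ Novodvorsky integral in the Steinberg direction, and correct handling of the regular nonvanishing factor that mediates between the two integrals.
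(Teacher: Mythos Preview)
Your outline is close to the paper's in spirit---the crux is indeed the comparison between Piatetski-Shapiro's integral for $\pi$ and Novodvorsky's integral for $\pi \times \St$---but there is a genuine circularity in your plan: you invoke Theorem~B to translate exceptional poles of $L^{\Nov}(\pi \times \St, s)$ into Weil--Deligne summands, yet in the paper the proof of Theorem~B for $\sigma = \St$ \emph{uses} Theorem~C (see the last paragraph of the proof of Theorem~B). The paper's logical order is: establish the zeta-integral comparison (Proposition~\ref{prop:relation}), analyse the ideals $I \supseteq J \supseteq K$ of \S6.3 to obtain Corollary~\ref{cor:AimpliesC} and Proposition~\ref{cor:AimpliesB}, then prove Theorem~A, then Theorem~C, and only afterwards Theorem~B.

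Concretely: when $\chi_\pi|\cdot|^{2s_0+1} \ne 1$, the $I,J,K$ analysis shows that subregularity at $s_0$ is equivalent to $s_0$ being a pole of $\tfrac{L(\pi,s)L(\pi,s+1)}{L^{\Nov}(\pi\times\St,\,s+1/2)}$; Theorem~A replaces $L^{\Nov}$ by the Langlands factor, and Lemma~\ref{lem:langlandsfactor} supplies the Weil--Deligne interpretation. When $\chi_\pi|\cdot|^{2s_0+1} = 1$, Proposition~\ref{cor:AimpliesB} shows subregularity is equivalent to $s_0+\tfrac12$ being an exceptional pole of $L^{\Nov}(\pi\times\St,s)$; but the further translation into ``$\phi_\pi$ has a summand $|\cdot|^{-(s_0+1/2)}\otimes\mathrm{sp}(1)$'' is \emph{not} extracted from Theorem~B---it is done by a direct case-by-case check against the R\"osner--Weissauer tables and the explicit Langlands parameters in \cite{robertsschmidt07}. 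Your Weil--Deligne argument (that a $1$-dimensional summand of $\phi_\pi\otimes\phi_{\St}$ forces a Steinberg-type $2$-dimensional summand of $\phi_\pi$) is correct, but it only addresses the ``equivalently'' clause inside case~(ii), not the main biconditional linking subregularity to exceptional poles. Finally, your assertion that Novodvorsky-regular poles of $L(\pi\times\St,s+\tfrac12)$ automatically satisfy $\chi_\pi|\cdot|^{2s_0+1}\ne 1$ is not justified and is not how the paper separates the two cases; and be aware that the comparison of integrals is subtler than ``a shift and a nonvanishing factor'', since the Bessel character $\Lambda$ varies with $s$ (see the Remark following the proposition on $K$ in \S6.3).
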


  That is, a pole is subregular precisely when it arises from a direct summand of the Langlands parameter which is either 1-dimensional, or 2-dimensional and self-dual.

  \begin{remark}
   \cref{thm:subregpole} is a fairly straightforward consequence of the results of \cite{roesnerweissauer18}. We include it here partly because it motivates the formulation of Conjectures \ref{conj:Novexpole} and \ref{conj:dist}, and more importantly, because \cref{thm:subregpole} plays a major role in the proof of \cref{thm:compat}. More precisely, we shall prove directly that an analogue of \cref{thm:subregpole} holds with the Langlands $L$-factor in the denominator replaced by the Novodvorsky $L$-factor, and deduce \cref{thm:compat} when $\sigma$ is the Steinberg by comparing this with \cref{thm:subregpole}.
  \end{remark}

 \subsection{Distinction of representations}

  Our next result is an interpretation of exceptional poles in terms of $H$-invariant periods, where $H = \{(h_1, h_2) \in \GL(2, F) \times \GL(2, F) : \det(h_1) = \det(h_2)\}$ (which is naturally a subgroup of $\GSp(4, F)$, see \cref{sect:notation} below). It is not hard to show (see \cref{prop:auxzeta} below) that if $s_0$ is an exceptional pole of $L(\pi \times \sigma, s)$, then we have $\Hom_H\Big( \pi \otimes (|\cdot|^{s_0} \boxtimes \sigma), \CC\Big) \ne 0$.

\refstepcounter{conjecture}

  \begin{conjecture}
   \label{conj:dist}
   The dimension of $\Hom_H\Big( \pi \otimes (|\cdot|^{s_0} \boxtimes \sigma), \CC\Big)$ is 1 if $s_0$ is an exceptional pole of $L^{\Nov}(\pi \times \sigma, s)$, and 0 otherwise.
  \end{conjecture}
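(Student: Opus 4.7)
The plan is to split \cref{conj:dist} into two parts: multiplicity at most one, and vanishing away from exceptional poles. The lower bound $\dim \Hom_H \ge 1$ at an exceptional pole is already the content of \cref{prop:auxzeta}, obtained as the residue of Novodvorsky's zeta integral; moreover, that construction exhibits any such functional as the leading Laurent coefficient at $s = s_0$ of the zeta integral on the Whittaker model of $\pi \times \sigma$.

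For the upper bound $\dim \Hom_H \le 1$, the most direct route is a Bernstein--Zelevinsky / Mackey decomposition of $\pi \otimes (|\cdot|^{s_0} \boxtimes \sigma)$ restricted to $H$, cut out by the Whittaker datum of $\pi$. The relevant double-coset space $P \backslash \GSp(4,F) / H$, for $P$ a suitable parabolic of $\GSp(4,F)$, has a unique open orbit on which the $H$-invariant functional is unique up to scalar---this is essentially what Piatetski-Shapiro's unfolding computes---and the non-open strata should be shown to contribute nothing by a direct calculation of central characters on the corresponding Levi subgroups. A more conceptual, but more ambitious, reformulation would be to prove that $(\GSp(4,F), H)$ is a Gelfand pair relative to the Whittaker twist.

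The vanishing half is the harder direction, and is where the main obstacle lies. When $\sigma$ is non-supercuspidal, Theorem B translates the hypothesis ``$s_0$ is not an exceptional pole'' into the statement that $\phi_\pi \otimes \phi_\sigma$ has no one-dimensional unramified direct summand whose $L$-factor is singular at $s_0$; a case-by-case Jacquet-module analysis parallel to the proof of Theorem B should then show that any $H$-invariant functional on $\pi \otimes (|\cdot|^{s_0} \boxtimes \sigma)$ would force the appearance of such a summand, a contradiction. The serious difficulty is the regime in which both $\pi$ and $\sigma$ are supercuspidal: there Jacquet-module and parabolic-induction techniques are unavailable, and one would presumably have to resort to a global argument---embedding $\pi \times \sigma$ into a cuspidal automorphic representation on $\GSp(4) \times \GL(2)$ over a number field, using Piatetski-Shapiro--Rallis-style unfolding of a global $H(\mathbb{A})$-period integral to link a pole of the global $L$-function at $s_0$ to the non-vanishing of the local $H$-invariant functional, and descending via strong multiplicity one. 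Making such a local--global bridge unconditional is notoriously delicate, which is presumably the reason the statement is left as a conjecture rather than a theorem.
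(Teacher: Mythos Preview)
The statement is a conjecture in the paper; only the case of non-supercuspidal $\sigma$ is established (Theorem~D), and you correctly identify the supercuspidal--supercuspidal regime as the essential obstruction. For the case that \emph{is} proved, the paper's route differs from yours and is considerably shorter: since $\sigma$ is (a subquotient of) a principal series $i(\mu,\nu)$, the $H$-representation $\sigma \boxtimes |\cdot|^{s_0}$ is induced from a character $\rho$ of the subgroup $H_+ = \bigl\{\bigl(\stbt{\star}{\star}{0}{\star}, \star\bigr)\bigr\} \subset H$, and Frobenius reciprocity gives $\Hom_H\bigl(\pi \otimes (|\cdot|^{s_0} \boxtimes \sigma), \CC\bigr) \cong \Hom_{H_+}(\pi, \rho)$. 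The right-hand side is the space of $(H_+,\rho)$-functionals whose dimension is computed in \cite[\S5]{roesnerweissauer18} in terms of subregular poles of $L(\pi \times \mu, s)$, and \cref{prop:PScase} together with \cref{cor:AimpliesB} identifies these with the exceptional poles of $L(\pi \times \sigma, s)$. Both the bound $\le 1$ and the vanishing away from exceptional poles thus come from a single cited result; no separate Gelfand-pair or orbit argument is needed.

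Your outline, by contrast, has genuine gaps even in the non-supercuspidal-$\sigma$ range. A Mackey analysis over $P \backslash G / H$ requires $\pi$ to sit inside a parabolically induced representation, which fails when $\pi$ itself is supercuspidal---yet Theorem~D handles all generic $\pi$, so your method would not recover it. Replacing $P$ by the Whittaker unipotent $N$ does not yield a tractable finite-orbit problem with evident open-orbit uniqueness, and the claim that non-open strata ``contribute nothing by a direct calculation of central characters'' is precisely the content of the case work in \cite{roesnerweissauer18}, not a routine check. Finally, invoking a ``Jacquet-module analysis parallel to the proof of Theorem~B'' for the vanishing direction is circular as stated, since the proof of Theorem~B passes through Theorem~C and hence through the same R\"osner--Weissauer subregular-pole tables.
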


  \begin{lettertheorem}\label{thm:dist}
   \cref{conj:dist} is true if at least one of the following conditions holds:
   \begin{itemize}
    \item $\sigma$ is non-supercuspidal,
    \item the central character of $\pi$ is a square.
   \end{itemize}
  \end{lettertheorem}

  \begin{remark}
   The combination of Conjectures \ref{conj:Novexpole} and \ref{conj:dist} is closely related to the Gan--Gross--Prasad conjecture for non-tempered representations formulated in \cite{gangrossprasad20}.

   More precisely, taking $s_0 = 0$, Conjectures \ref{conj:Novexpole} and \ref{conj:dist} predict that $\Hom_H\big(\pi \otimes (\triv \boxtimes \sigma), \CC\big)$ is non-zero if and only if the $\GSp_4$-valued Weil--Deligne representation $\phi_{\pi}$ contains $\phi_{\sigma}^\vee$ as a self-dual direct summand. If we suppose $\chi_\pi = \chi_\sigma = 1$, so the representations involved factor through $\SO_5$ and $\SO_4$, then this condition on the Weil--Deligne representations is equivalent to the Langlands parameters of $\pi$ and $\triv \boxtimes \sigma^\vee$ forming a ``relevant pair'' in the sense of \cite{gangrossprasad20}. According to the conjectures of \emph{op.cit.}, this should be a necessary and sufficient condition for $\Hom_H\big(\pi \otimes (\triv \boxtimes \sigma), \CC\big)$ to be non-zero.\footnote{In \emph{op.cit.}~it is also assumed that the $L$-parameters are ``of Arthur type'', which in this situation corresponds to assuming that $\pi$ and $\sigma$ are tempered; but this is not essential to the formulation of the conjecture. It suffices that $\pi$ and $\sigma$ are generic (or members of generic $L$-packets).}

   So, in the light of \cref{thm:dist}, \cref{conj:Novexpole} is an instance of the non-tempered Gan--Gross--Prasad conjectures (mildly generalised from orthogonal groups to spin groups); and \cref{thm:Novexpole} verifies the conjecture for representations of this type when $\sigma$ is non-supercuspidal.
  \end{remark}

 \subsection{Multiplicity one for reducible representations}

  We now give an interpretation of the above results in terms of branching laws for reducible representations. It follows from results of Prasad and Emory--Takeda\footnote{The restriction $(\sigma_1 \boxtimes \sigma_2) |_{H}$ is a direct sum of irreducible $H$-representations lying in the same $L$-packet. Theorem 5 of \cite{prasad96} shows that there is at most one representation $\tau$ in this $L$-packet such that $\Hom_H(\pi \otimes \tau, \CC) \ne 0$; and the general result on multiplicity-one for GSpin groups from \cite{emorytakeda21}, via the isomorphisms $\GSp_4 \cong \GSpin_5$ and $H \cong \GSpin_4$, shows that for any such $\tau$ the Hom-space has dimension $\le 1$, giving the claim. Alternatively, the multiplicity-one result can be extracted directly from the  proof of \cite[Theorem 5]{prasad96} (Prasad, pers.comm.), although the result is not explicitly stated there.} that we have $\dim \Hom_H(\pi \otimes (\sigma_1 \boxtimes \sigma_2), \CC) \le 1$ for any irreducible generic representations $\pi$ of $\GSp(4, F)$ and $\sigma_1$, $\sigma_2$ of $\GL(2, F)$. Of course, this Hom-space can only be non-zero if $\chi_{\pi} \chi_{\sigma_1} \chi_{\sigma_2} = 1$.

  We consider here the situation in which one or both of the $\sigma_i$ is replaced by the reducible principal-series representation $\Sigma$ having the Steinberg representation as subrepresentation. (However, we continue to assume that $\pi$ itself is irreducible and generic.) One checks easily that for any irreducible generic $\sigma$ with $\chi_{\pi} \chi_{\sigma} = 1$, the leading term at $s = 0$ of the zeta-integral defining $L^{\Nov}(\pi \times \sigma, s)$ gives a non-zero element of $\Hom_H(\pi \otimes( \Sigma \boxtimes \sigma), \CC)$. Similarly, if $\chi_{\pi} = 1$, then the leading term of Piatetski-Shapiro's zeta integral (with $\lambda_1 = \lambda_2 = 1$ in the notation of \cref{sect:bessel}) defines a nonzero element of $\Hom_H(\pi \otimes( \Sigma \boxtimes \Sigma), \CC)$. We conjecture that these Hom-spaces are actually 1-dimensional, giving a generalisation to $\GSp_4 \times \GL_2 \times \GL_2$ of the results on branching laws for reducible representations proved in \cite{harrisscholl01} and \cite{loeffler-ggp}:

  \begin{conjecture}\label{conj:branching} \
   \begin{enumerate}[label=(\alph*)]
    \item Suppose $\pi$ and $\sigma$ are irreducible and generic, with $\chi_{\pi} \chi_{\sigma} = 1$. Then $\Hom_H(\pi \otimes (\Sigma \boxtimes \sigma), \CC)$ is 1-dimensional (and hence the leading term of the Novodvorsky zeta-integral is a basis of this space).
    \item Suppose $\pi$ is irreducible and generic with $\chi_{\pi} = 1$. Then the space $\Hom_H(\pi \otimes (\Sigma \boxtimes \Sigma), \CC)$ is 1-dimensional (and hence the leading term of the Piatetski-Shapiro zeta-integral is a basis).
   \end{enumerate}
  \end{conjecture}

  We shall see in \S 9 below that \cref{conj:branching}(a) implies \cref{conj:dist}, and we shall prove the following partial result:%: by twisting, we may assume that $s_0 = 0$ in \cref{conj:dist}, and it is easily checked that the leading term of $L^{\Nov}$ at $s = 0$ factors through the quotient $\pi \otimes (\triv \boxtimes \sigma)$ of $\pi \otimes (\Sigma \boxtimes \sigma)$ if and only if $s = 0$ is an exceptional pole.

  \begin{lettertheorem}\label{thm:branching} \
  \begin{enumerate}[label=(\alph*)]
   \item \cref{conj:branching}(a) is true if at least one of the following two conditions holds:
   \begin{enumerate}[label=(\roman*)]
    \item $\chi_{\pi}$ is a square in the group of characters of $F^\times$;
    \item $\sigma$ is non-supercuspidal, and $s = 0$ is not an exceptional pole of $L^\Nov(\pi \times \sigma, s)$.
   \end{enumerate}\medskip
   \item \cref{conj:branching}(b) is true.
  \end{enumerate}
  \end{lettertheorem}

  These results are used in \cite{LZ20} and \cite{LZ21-BSD} to study Euler systems for Shimura varieties attached to $\GSp(4)$ and $\GSp(4) \times \GL(2)$.

 \subsection{Acknowledgements} It is a pleasure to thank Mirko R\"osner and Rainer Weissauer for several interesting exchanges relating to the theory of \cite{roesnerweissauer17,roesnerweissauer18}; and Kei Yuan Chan, Yao Cheng, and Dipendra Prasad for their remarks on an earlier version of this paper. Finally, the author would also like to thank the anonymous referee for their careful reading of the paper and numerous valuable comments and corrections.

\section{General notation}
 \label{sect:notation}

 We shall consider the following setting:

 \begin{itemize}
  \item $F$ is a nonarchimedean local field of characteristic 0, and $q$ is the cardinality of its residue field.
  \item $|\cdot|$ the absolute value on $F$, normalised by $|\varpi| = \tfrac{1}{q}$ for $\varpi$ a uniformizer.

  \item We fix a nontrivial additive character $e: F \to \CC^\times$.

  \item $G$ denotes the group $\GSp(4, F)$ of matrices preserving the standard anti-diagonal symplectic form, and $H$ the group $\{ (h_1, h_2) \in \GL(2, F) \times \GL(2, F): \det(h_1) = \det(h_2)\}$. We consider $H$ as a subgroup of $G$ via the embedding
  \[ \iota: \left(\begin{pmatrix} a & b \\ c & d\end{pmatrix}, \begin{pmatrix} a' & b'\\ c'& d'\end{pmatrix}\right)\mapsto \begin{smatrix} a &&& b\\ & a' & b' & \\ & c' & d' & \\ c &&& d \end{smatrix}.
  \]

  \item In this paper ``representation'' will mean an admissible smooth representation on a complex vector space.

  \item An ``$L$-factor'' will mean a function of $s \in \CC$ of the form $1 / P(q^{-s})$, where $P$ is a polynomial with $P(0) = 1$. Any fractional ideal of $\CC[q^s, q^{-s}]$ containing the unit ideal is generated by a unique $L$-factor.
 \end{itemize}

\section{Principal series representations of
 \texorpdfstring{$\GL(2)$}{GL(2)}}

 \subsection{Definitions}

  \begin{definition}
   For $\mu$, $\nu$ smooth characters $F^\times \to \CC^\times$, and $s \in \CC$, we write $i_s(\mu, \nu)$ for the space of smooth functions $f: \GL(2, F) \to \CC$ satisfying
   \[ f\left(\begin{smatrix} a & \star \\ 0 & d \end{smatrix} g\right) = \mu(a) \nu(d) |a/d|^s f(g), \]
   with $\GL(2, F)$ acting via right translation. If $s = \tfrac{1}{2}$ we write simply $i(\mu, \nu)$.
  \end{definition}

  As is well known, $i(\mu, \nu)$ is irreducible unless $\mu / \nu = |\cdot|^{\pm 1}$; if $\mu / \nu = |\cdot|$ it has a 1-dimensional quotient, and if $\mu / \nu = |\cdot|^{-1}$ it has a 1-dimensional subrepresentation. There is a unique (up to scalars) non-zero intertwining operator $i_s(\mu, \nu) \to i_{1-s}(\nu, \mu)$. The \emph{Steinberg representation} $\operatorname{St}$ is the unique irreducible subrepresentation of $i(|\cdot|^{1/2}, |\cdot|^{-1/2})$.

 \subsection{Godement--Siegel sections}

  Let $\cS(F^2)$ denote the Schwartz space of locally-constant, compactly-supported functions on $F^2$, with $\GL(2, F)$ acting via the usual formula $(g \cdot \Phi)(x, y) = \Phi( (x, y) \cdot g)$. Then we define
  \[
   f^{\Phi}(g; \mu, \nu, s) = \mu(\det g) |\det g|^s \int_{F^\times} \Phi((0, x) \cdot g) (\mu/\nu)(x) |x|^{2s}\, \mathrm{d}^\times x,
  \]
  which converges for $\Re(s) > 0$ and defines an element of $i_s(\mu, \nu)$. We write simply $f^{\Phi}(\mu, \nu, s)$ for the function $f^{\Phi}(-; \mu, \nu, s)$. We may extend the definition to all $s \in \CC$ by analytic continuation, away from simple poles at the $s$ such that $|\cdot|^{2s} = \nu / \mu$. %Note that $\Phi \mapsto f^{\Phi}$ is $\GL(2, F)$-equivariant up to a twist: we have
%  \[ g \cdot f^{\Phi} = \mu(\det g) |\det g|^s \cdot f^{g \Phi}. \]

  \begin{remark}
   We have $f^{\Phi}(g; \mu, \nu, s) = \mu(\det g) f^{\Phi}(g; \mu/\nu, s)$ in the notation of \cite[\S 8.1]{LPSZ1}.
  \end{remark}

  \begin{proposition} Let $\widehat{\Phi}$ denote the Fourier transform.
   \begin{enumerate}[label=(\roman*)]
    \item If $\nu \ne 1$, then the map $\Phi \mapsto f^{\Phi}(1, \nu, 0)$ is well-defined, nonzero, and $\GL(2, F)$-equivariant, and identifies $i(|\cdot|^{-1/2}, |\cdot|^{1/2} \nu)$ with the maximal quotient of $\cS(F^2)$ on which $F^\times$ acts by $\nu$.
    \item If $\nu \ne |\cdot|^{-2}$, then the map $\Phi \mapsto f^{\widehat\Phi}(\nu, 1, 1)$ is is well-defined, nonzero, and $\GL(2, F)$-equivariant, and identifies $i(|\cdot|^{1/2}\nu, |\cdot|^{-1/2})$ with the maximal quotient of $\cS(F^2)$ on which $F^\times$ acts by $\nu$.
   \end{enumerate}
  \end{proposition}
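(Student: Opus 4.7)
The plan is to prove part (a) first, then deduce part (b) by a Fourier-transformed variant. For (a), the well-definedness, non-vanishing and $\GL(2)$-equivariance are elementary. At $g = e$, the defining integral is the Tate integral $\int_{F^\times} \Phi(0,x)\nu^{-1}(x)|x|^{2s}\,\mathrm{d}^\times x$, which by Tate's local theory is a rational function of $q^{-s}$ divisible by $L(\nu^{-1}, 2s)$; this is finite at $s = 0$ precisely when $\nu \ne 1$, so the map $T_0 : \Phi \mapsto f^\Phi(\cdot;1,\nu,0)$ is well-defined, and non-vanishing follows by choosing a separable test function whose restriction to $\{0\}\times F$ has non-zero Tate integral. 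Equivariance is a change of variable, and substituting $x \mapsto xd$ gives the Borel transformation
\[
f^\Phi(\stbt{a}{*}{0}{d}g;1,\nu,0) = \nu(d) f^\Phi(g;1,\nu,0),
\]
so the image lies in $i_0(1,\nu) = i(|\cdot|^{-1/2}, |\cdot|^{1/2}\nu)$ and $T_0$ factors through the $\nu$-coinvariants $\cS(F^2)_\nu$.

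The core step is to show the induced map $\cS(F^2)_\nu \to i(|\cdot|^{-1/2}, |\cdot|^{1/2}\nu)$ is an isomorphism. I would use the short exact sequence of $\GL(2)$-modules
\[
0 \to \cS(F^2 \setminus \{0\}) \to \cS(F^2) \to \CC \to 0,
\]
in which the quotient $\CC$ (evaluation at the origin) has trivial central character and vanishes in $\nu$-coinvariants for $\nu \ne 1$. Since $\GL(2)$ acts transitively on $F^2 \setminus \{0\}$ with stabiliser the mirabolic $B' = \{\stbt{a}{b}{0}{1}\}$, and $Z\cdot B' = B$ is the full upper Borel, Mackey--Frobenius identifies $\cS(F^2 \setminus \{0\})_\nu$ with an induced representation from $B$, which the Borel transformation law above pins down to $i(|\cdot|^{-1/2}, |\cdot|^{1/2}\nu)$ exactly. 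Since this principal series has endomorphism algebra $\CC$ (even in the reducible case $\nu = |\cdot|^{-2}$, as its Jordan--H\"older constituents St and the 1-dimensional character are distinct and non-exchanging), any non-zero equivariant map between two abstractly isomorphic copies is an isomorphism, so $T_0$ itself is.

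Part (b) is analogous, applied to $\widehat\Phi$ with parameters $(\nu,1,1)$: the relevant Tate integral $\int_{F^\times} \widehat\Phi(0,x)\nu(x)|x|^2\,\mathrm{d}^\times x$ is divisible by $L(\nu, 2)$, so the map is well-defined iff $\nu \ne |\cdot|^{-2}$, and the transformation law places the image in $i_1(\nu,1) = i(|\cdot|^{1/2}\nu, |\cdot|^{-1/2})$. The main obstacle is the $\GL(2)$-equivariance of $T_1$, since Fourier transform intertwines the standard $\GL(2)$-action on $\cS(F^2)$ with its twist by $g \mapsto |\det g|^{-1}(g^t)^{-1}$; the shift of parameters from $(1,\nu,0)$ to $(\nu,1,1)$ is chosen precisely to absorb this twist, but the verification is bookkeeping-intensive. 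An alternative is to expand $\widehat\Phi$ inside the integral and apply Tate's functional equation, re-expressing $T_1$ directly in terms of $\Phi$ and bypassing the Fourier transform entirely. Once equivariance is established, the identification of $\cS(F^2)_\nu$ with $i(|\cdot|^{1/2}\nu, |\cdot|^{-1/2})$ proceeds as in (a); the two maps $T_0$ and $T_1$ then differ by the standard intertwining operator $i(\mu, \nu) \to i(\nu, \mu)$ between the two principal-series realisations of the same quotient $\cS(F^2)_\nu$.
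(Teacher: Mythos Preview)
Your argument is correct and essentially complete. The paper, however, gives no proof at all: its entire argument reads ``Well-known.'' So there is nothing to compare your approach against; you have supplied a full proof where the paper supplies none.

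A few minor remarks on your sketch. In part (a), when you pass from the short exact sequence $0 \to \cS(F^2 \setminus \{0\}) \to \cS(F^2) \to \CC \to 0$ to $\nu$-coinvariants, you implicitly use not only that $\CC_\nu = 0$ for $\nu \ne 1$ but also that the connecting $\mathrm{Tor}_1$ term vanishes, so that $\cS(F^2 \setminus \{0\})_\nu \to \cS(F^2)_\nu$ is injective as well as surjective; this is true (split $F^\times$ as $\varpi^{\ZZ} \times \cO_F^\times$ and use smoothness), but worth a sentence. In part (b), your ``alternative'' of unwinding $\widehat\Phi$ via Tate's functional equation is in fact the cleanest route: it shows directly that $T_1$ agrees with $T_0$ composed with the standard intertwining operator, so equivariance and the isomorphism statement are inherited from part (a) without any separate bookkeeping. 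Either way the result follows.
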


   \begin{proof} Well-known.
%    The condition $\nu \ne 1$ implies that $f^{\Phi}(-)$ is well-defined as an element of $i_{0}(1, \nu) = i(|\cdot|^{-1/2}, |\cdot|^{1/2} \nu)$, and that the $f^{\Phi}$ for varying $\Phi$ span $i_{s_0}(1, \nu)$. Moreover, the map factors through the above quotient by construction.
%
%    So we must show the following: if $\Phi \in \cS(F^2)$ is such that $f^{\Phi}(1, \nu, 0) = 0$, then $\Phi$ lies in the subspace generated by functions of the form $(\stbt{z}{0}{0}{z} - \nu(z))$ for $z \in F^\times$. We may suppose $\stbt{a}{}{}{a} \Phi = \nu(a) \Phi$ for $a \in \cO^\times$, so it suffices to consider the action of the operator $(1 - \nu(\varpi)^{-1}\stbt{\varpi}{0}{0}{\varpi})$ for $\varpi$ a uniformizer. A formal inverse of this is given by $\Phi \mapsto \sum_{n \ge 0} \nu(\varpi)^{-1}\stbt{\varpi}{0}{0}{\varpi}\Phi$, which is convergent if $\nu$ is ramified or $|\nu(\varpi)| \ge 1$, and can be extended to all $\nu \ne 1$ by analytic continuation; and this series gives a compactly-supported function precisely when $\sum_{n \in \ZZ} \nu(\varpi)^{-n} \stbt{\varpi}{0}{0}{\varpi}\Phi = 0$, which is equivalent to $f^{\Phi} = 0$.
   \end{proof}

 \subsection{Whittaker functions}
  \label{sect:gl2whittaker}

  For $\Phi \in \cS(F^2)$ and $\mu, \nu$ smooth characters, we define
  \[ W^{\Phi}(g; \mu, \nu, s) = \int_{F} f^{\Phi}(\begin{smatrix} 0 & 1 \\ -1 & 0 \end{smatrix}\begin{smatrix} 1 & x \\ 0 & 1 \end{smatrix}g; \mu, \nu, s) e(x) \, \mathrm{d}x, \]
  and $W^{\Phi}(g; \mu, \nu) = W^{\Phi}(g; \mu, \nu, \tfrac{1}{2})$. Again we write simply $W^{\Phi}(\mu, \nu)$ for the function $W^{\Phi}(-; \mu, \nu)$. Note that the integral is entire as a function of $s$, although $f^{\Phi}(-)$ may not be, and there is no $s$ such that $W^{\Phi}(g; \mu, \nu, s)$ vanishes for all $g$ and $\Phi$. We have
  \[ W^{\Phi}(\mu, \nu, s) = \varepsilon \cdot W^{\widehat\Phi}(\nu, \mu,1- s)\]
  where $\varepsilon$ is a nonzero constant independent of $\Phi$ (a local root number). We want to study the space of functions $W^{\Phi}(\mu, \nu)$ for varying $\Phi \in \cS(F^2)$.
  \begin{itemize}

  \item If $\sigma = i(\mu, \nu)$ is irreducible, then the space of functions $W^{\Phi}(\mu, \nu)$ for varying $\Phi \in \cS(F^2)$ is precisely the Whittaker model\footnote{We define all Whittaker models for $\GL(2, F)$ with respect to the character $\begin{smatrix}1 & x \\ &1 \end{smatrix} \mapsto e(-x)$; this is slightly non-standard, but will simplify our formulae later} $\cW(\sigma)$ of $\sigma$.

  \item If $\sigma$ has a one-dimensional quotient, then the functions $f^{\Phi}(\mu, \nu, s)$ are regular at $s = \tfrac{1}{2}$ and span the representation $\sigma$; and mapping $f^{\Phi}$ to $W^{\Phi}$ gives a bijection from $\sigma$ to a subspace $\cW(\sigma) \subset \operatorname{Ind}_{N_2}^{\GL_2} e^{-1}$, containing the Whittaker model of the generic subrepresentation $\sigma^{\mathrm{gen}}$ as a codimension-1 subspace.

  \item If $\sigma$ has a one-dimensional subrepresentation, then it does not have a Whittaker model; and the functions $W^{\Phi}(\mu, \nu)$ instead give the Whittaker model of $\sigma' = i(\nu, \mu)$, as we have just defined it. In this case, the $f^{\Phi}(\mu, \nu, s)$ are not all well-defined at $s = \tfrac{1}{2}$ (they may have poles). If we define
  \[ \cS_0(F^2) \coloneqq \{ \Phi \in \cS(F^2) : \Phi(0, 0) = 0\},\]
  then the $f^{\Phi}$ for $\Phi \in \cS_0(F^2)$ are well-defined and span $\sigma$. The corresponding $W^{\Phi}$ span the Whittaker model of the irreducible subrepresentation of $\sigma'$, which is also the irreducible quotient of $\sigma$.
  \end{itemize}

\section{Bessel models}

 Throughout this section, $\pi$ denotes an irreducible representation of $G$ with central character $\chi_{\pi}$.

 \subsection{The Bessel model}
  \label{sect:bessel}
  Let $\Lambda = (\lambda_1, \lambda_2)$ be a pair of characters of $F^\times$ with $\lambda_1 \lambda_2 = \chi_{\pi}$. A (split) \emph{Bessel model} of $\pi$ (with respect to $\Lambda$) is a $G$-invariant subspace isomorphic to $\pi$ inside the space of functions $G \to \CC$ satisfying
  \[ B( \begin{smatrix} 1 & & u & v\\ & 1 & w & u \\ & & 1 \\ &&&1 \end{smatrix}\dfour{x}{y}{x}{y} g) = e(u) \lambda_1(x) \lambda_2(y) B(g). \]
  It follows from \cite[Theorem 6.3.2(i)]{robertsschmidt16} that if such a subspace exists, it is unique, and we denote it by $\cB_{\Lambda}(\pi)$.

 \subsection{Piatetski-Shapiro's integral}

  Suppose $\pi$ admits a $\Lambda$-Bessel model $\cB_{\Lambda}(\pi)$.

  \begin{definition}
   For $B \in\cB_{\Lambda}(\pi)$, $\mu$ a smooth character of $F^\times$, and $\Phi_1, \Phi_2 \in \cS(F^2)$, we define
   \[ Z(B, \Phi_1, \Phi_2; \Lambda, \mu, s) = \int_{N_H \backslash H} B(h) \Phi_1((0, 1) \cdot h_1) \Phi_2( (0, 1) \cdot h_2) \mu(\det h)|\det h|^{s + 1/2} \, \mathrm{d}h, \]
   where $N_H = \left(\stbt{1}{\star}{0}{1}, \stbt{1}{\star}{0}{1}\right)$ is the unipotent radical of the standard Borel subgroup of $H$.
  \end{definition}

  This converges for $\Re(s) \gg 0$ and has meromorphic continuation as a rational function of $q^s$. If $\mu$ is the trivial character, we write simply $Z(B, \Phi_1, \Phi_2; \Lambda, s)$; we can always reduce to this case by replacing $\pi$ with $\pi \otimes \mu$, and $(\lambda_1, \lambda_2)$ with $(\lambda_1 \mu, \lambda_2 \mu)$. The following is the main result of  \cite{roesnerweissauer17}:

  \begin{theorem}[R\"osner--Weissauer]
   \label{thm:RW1}
   The $\CC$-vector space spanned by the functions
   \[
    \{ Z(B, \Phi_1, \Phi_2; \Lambda, s) : B \in \cB_{\Lambda}(\pi), \Phi_1, \Phi_2 \in \cS(F^2)\}
   \]
   is a fractional ideal of $\CC[q^s, q^{-s}]$ containing the constant functions. This ideal is independent of $\Lambda$, and is generated by the $L$-factor $L(\pi, s)$ associated to the Langlands parameter $\phi_{\pi}$.
  \end{theorem}

 \subsection{Generic representations}

  Recall that $\pi$ is said to be \emph{generic} if it admits a Whittaker model, i.e.~if it is isomorphic to a $G$-invariant subspace of the space of functions $W: G \to \CC$ satisfying
  \begin{equation}
   \label{eq:whittaker}
   W(\begin{smatrix} 1 &\phantom{-}x & \phantom{-}* & \phantom{-}* \\ & \phantom{-}1 & \phantom{-}y & \phantom{-}* \\ &&\phantom{-}1 & -x \\ &&&\phantom{-}1 \end{smatrix} g) = e(x + y) W(g).
  \end{equation}
  Such a model is unique if it exists; we denote it by $\cW(\pi)$.

  \begin{proposition}
   Suppose $\pi$ is generic, and let $\mu$ be a smooth character of $F^\times$. For any $W \in \cW(\pi)$, the integral
   \[ B(W; \mu, s) \coloneqq \int_{F^\times} \int_F W\left(\begin{smatrix} a \\ &a \\ &x&1 \\ &&&1 \end{smatrix}w_2\right)|a|^{s-3/2} \mu(a)\, \mathrm{d}x\, \mathrm{d}^\times a,\qquad w_2 = \begin{smatrix}1 \\ &&1 \\ &-1  \\ &&&1\end{smatrix} \]
   converges for $\Re(s) \gg 0$ and has meromorphic continuation as a rational function of $q^{s}$. The set $\{ B(W; \mu, s) : W \in \cW(\pi)\}$ is a fractional ideal of $\CC[q^s, q^{-s}]$ containing the constant functions, and it is generated by the spinor $L$-factor $L(\pi \times \mu, s)$ associated to the Langlands parameter of $\pi \times \mu$.
  \end{proposition}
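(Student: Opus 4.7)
The plan is to deduce the proposition from the Piatetski--Shapiro theorem (\cref{thm:RW}) by identifying the fractional ideal $I_W = \{B(W;\mu,s) : W \in \cW(\pi)\}$ with the Piatetski--Shapiro ideal generated by the integrals $Z(B,\Phi_1,\Phi_2;\Lambda,\mu,s)$. Twisting $\pi$ by $\mu$ (and replacing $(\lambda_1,\lambda_2)$ by $(\lambda_1\mu,\lambda_2\mu)$) immediately reduces us to $\mu = 1$.

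Convergence for $\Re(s) \gg 0$ is a standard consequence of the asymptotic behaviour of Whittaker functions on the torus: after Iwasawa decomposition, the inner integral over $x$ is effectively supported on a compact set depending on $W$, and the outer integral over $a \in F^\times$ converges once $\Re(s)$ exceeds the slopes appearing in the Jacquet module of $\pi$. The meromorphic continuation and the rationality in $q^s$ will then follow once $I_W$ has been identified with a known fractional ideal.

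The heart of the argument is an explicit relation between the two zeta-integrals. For any split Bessel character $\Lambda = (\lambda_1, \lambda_2)$ with $\lambda_1 \lambda_2 = \chi_{\pi}$, there is a natural linear map $\cW(\pi) \to \cB_{\Lambda}(\pi)$, which I will denote $W \mapsto B_W^{\Lambda}$, obtained by a partial Fourier transform of $W$ along the component of the Bessel unipotent that is not contained in the Whittaker unipotent, followed by an average against $\lambda_1$ on a fibre of the split Bessel torus. Applying the Piatetski--Shapiro integral to $B_W^{\Lambda}$ and choosing Schwartz data $\Phi_1, \Phi_2$ whose support localises the $N_H\backslash H$ integration onto the Bruhat cell containing the matrix $\begin{smatrix}a & & & \\ & a & & \\ & x & 1 & \\ & & & 1\end{smatrix}w_2$, one finds after unfolding that $Z(B_W^{\Lambda}, \Phi_1, \Phi_2; \Lambda, s) = c \cdot B(W; s)$ for a nonzero constant $c$ depending on $\Phi_1, \Phi_2$ and $\Lambda$ only. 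This gives $I_W \subseteq (L(\pi, s))$.

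For the reverse containment, one varies $\Lambda$ and uses Fourier inversion on the split Bessel torus to express any $B \in \cB_{\Lambda}(\pi)$ as a finite sum of images of the above map; running the unfolding backwards then writes an arbitrary Piatetski--Shapiro integral as a $\CC[q^s, q^{-s}]$-linear combination of $B(W;s)$'s. In particular, since the Piatetski--Shapiro ideal contains the constant function $1$, so does $I_W$. The main obstacle I anticipate is the Bruhat-cell bookkeeping in the unfolding: one must check that, for a suitable choice of Schwartz data, the contributions from the ``bad'' cells either vanish or reassemble into further $B(W;s)$ terms, and that the Fourier averaging over $\Lambda$ is compatible with the unipotent integration defining $Z(\ldots)$. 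Together with \cref{thm:RW}, this identifies $I_W = (L(\pi,s))$ as required.
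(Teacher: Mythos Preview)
The paper does not give an argument here: it simply records that the convergence and analytic continuation are due to Novodvorsky, and that the identification of the resulting $L$-factor with the Langlands $L$-factor is the main theorem of Takloo-Bighash \cite{takloobighash00}, proved there by a direct case-by-case computation for each Sally--Tadi\'c type. Your proposal takes an entirely different route, attempting to deduce the result from the R\"osner--Weissauer theorem on Piatetski--Shapiro integrals.

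That route, as written, has two genuine gaps. First, there is a circularity in your construction of the map $\cW(\pi)\to\cB_\Lambda(\pi)$. The ``partial Fourier transform along the Bessel unipotent followed by an average against $\lambda_1$'' that you describe \emph{is} the assignment $W\mapsto B(gW;\mu,s)$: the $\int_F\cdots\,\mathrm{d}x$ is the unipotent step and the $\int_{F^\times}\cdots\mu(a)\,\mathrm{d}^\times a$ is the torus average. This is precisely how the map is built in the Roberts--Schmidt proposition immediately following the one under discussion, and establishing that it is well-defined and surjects onto $\cB_\Lambda(\pi)$ already uses the analytic continuation and the pole computation you are trying to prove.

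Second, the claimed identity $Z(B_W^\Lambda,\Phi_1,\Phi_2;\Lambda,s)=c\cdot B(W;s)$ with $c$ a nonzero constant is not correct. On the left-hand side $\Lambda$ is fixed and $s$ is the Piatetski--Shapiro variable; on the right-hand side $s$ is the Novodvorsky variable. But the Whittaker-to-Bessel map for a \emph{fixed} $\Lambda$ corresponds to a \emph{single} value of the Novodvorsky parameter (namely the $s$ for which $\Lambda=(\mu^{-1}|\cdot|^{1/2-s},\mu\chi_\pi|\cdot|^{s-1/2})$), so no localising choice of $\Phi_1,\Phi_2$ can recover the full one-parameter family $B(W;s)$. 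The actual unfolding in the paper (\cref{prop:relation}) couples $\Lambda$ to $s$ and relates the Piatetski--Shapiro integral to the $\GSp(4)\times\GL(2)$ Novodvorsky integral, with an extra factor of $L(\pi\times\nu_2,s)$, rather than to $B(W;s)$. In short, the logical flow in the paper runs in the opposite direction to your proposal: the present proposition (via Takloo-Bighash) is an \emph{input} used to construct the Whittaker-to-Bessel map, which is then combined with R\"osner--Weissauer to analyse the $\GSp(4)\times\GL(2)$ integrals, not the other way around.
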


  \begin{proof} The definition of the integral, and the proof of its analytic continuation, are due to Novodvorsky \cite{novodvorsky79}. The proof that the $L$-factor defined by this integral coincides with the Langlands $L$-factor is due to Takloo-Bighash \cite{takloobighash00}.\end{proof}

  \begin{proposition}[Roberts--Schmidt]
   For any $s$, the space of functions
   \[ \widetilde{B}_{W}(g; \mu, s) \coloneqq \frac{1}{L(\pi \times \mu, s)} B(g W; \mu, s)\]
   for $W \in \cW(\pi)$ is the Bessel model $\cB_{\Lambda}(\pi)$, where $\Lambda = (\mu^{-1}|\cdot|^{1/2 - s}, \mu \chi_{\pi}|\cdot|^{s-1/2})$.
  \end{proposition}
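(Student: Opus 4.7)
My plan is to invoke uniqueness of Bessel models: it suffices to exhibit a nonzero, $G$-equivariant linear map from $\cW(\pi)$ into the space of functions on $G$ satisfying the Bessel transformation law for $\Lambda$, since the image is then automatically isomorphic to $\pi$ and must coincide with $\cB_{\Lambda}(\pi)$. The map $W \mapsto \widetilde{B}_W(-\,;\mu,s)$ is $G$-equivariant by construction. The preceding Proposition identifies the set $\{ B(W; \mu, s) : W \in \cW(\pi)\}$ with the fractional ideal $L(\pi \times \mu, s)\cdot\CC[q^s, q^{-s}]$, so after dividing by the $L$-factor the functions lie in $\CC[q^s, q^{-s}]$ and are therefore defined for every $s$; moreover, choosing $W$ with $B(W; \mu, s) = L(\pi \times \mu, s)$ gives $\widetilde{B}_W(1; \mu, s) \equiv 1$, so the map is nonzero.

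The substance of the proof is the Bessel transformation law. I would write a general element of the Bessel subgroup as $b = b_u \cdot b_t$ with $b_t = \dfour{x}{y}{x}{y}$ and $b_u$ the unipotent parametrised by $(u,v,w)$, and establish the two matrix identities
\[
\begin{smatrix} a\\&a\\&x'&1\\&&&1\end{smatrix} w_2\, b_u \;=\; n' \cdot \begin{smatrix} a\\&a\\&x'-w&1\\&&&1\end{smatrix} w_2,
\]
where $n'$ lies in the Whittaker unipotent with $N$-character equal to $e(u)$, and
\[
\begin{smatrix} a\\&a\\&x'&1\\&&&1\end{smatrix} w_2\, b_t \;=\; (y I_4) \cdot \begin{smatrix} ax/y\\&ax/y\\&xx'/y&1\\&&&1\end{smatrix} w_2,
\]
where the central scalar $y I_4$ contributes $\chi_{\pi}(y)$. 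Substituting these into the definition of $B(bW;\mu,s)$ and changing variables ($x' \mapsto x' + w$ for the unipotent and $(a, x') \mapsto (ay/x,\, yx'/x)$ for the torus) yields
\[
B(bW; \mu, s) \;=\; e(u)\, \chi_{\pi}(y)\, \mu(y/x)\, |y/x|^{s - 1/2} \cdot B(W; \mu, s).
\]
This prefactor equals $e(u)\lambda_1(x)\lambda_2(y)$ precisely when $(\lambda_1, \lambda_2) = (\mu^{-1}|\cdot|^{1/2 - s},\, \mu\chi_{\pi}|\cdot|^{s - 1/2})$, which is the claimed $\Lambda$; and one checks that $\lambda_1 \lambda_2 = \chi_{\pi}$, so the data do specify a valid Bessel character.

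The main obstacle is purely bookkeeping: verifying the above two matrix identities (including the fact that the Jacobian of the change of variables on the torus produces exactly the extra $|y/x|$ needed to shift the exponent from $s - \tfrac{3}{2}$ to $s - \tfrac{1}{2}$), and tracking which of $x, y$ becomes the scalar block extracted via the central character. Once the transformation law is established, the combination of $G$-equivariance, nonvanishing, and uniqueness of the Bessel model identifies the image with $\cB_{\Lambda}(\pi)$.
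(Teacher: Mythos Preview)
Your argument is correct and essentially self-contained. The paper, by contrast, does not prove the proposition at all: it attributes the statement to Roberts--Schmidt and simply writes ``See \cite{robertsschmidt16} for details.'' Your route---verifying the Bessel transformation law by direct matrix manipulation, then invoking irreducibility of $\pi$ and uniqueness of Bessel models to identify the image---is the natural way to establish such a result and is presumably what one finds in the cited reference.

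One minor imprecision worth flagging: the preceding proposition asserts that the $\CC[q^{\pm s}]$-ideal \emph{generated} by the values $\{B(W;\mu,s)\}$ equals $(L(\pi\times\mu,s))$, not that a single $W$ realises $B(W;\mu,s)=L(\pi\times\mu,s)$. Nonvanishing of the map $W\mapsto\widetilde{B}_W$ at every fixed $s$ still follows, since a finite $\CC[q^{\pm s}]$-combination of the $\widetilde{B}_{W_i}(1;\mu,s)$ equals $1$, so they cannot all vanish simultaneously. Your verification of the torus transformation (including the Jacobian $|y/x|$ shifting the exponent from $s-\tfrac{3}{2}$ to $s-\tfrac{1}{2}$) and of the unipotent transformation is accurate, and $\lambda_1\lambda_2=\chi_\pi$ holds as required.
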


  See \cite{robertsschmidt16} for details. Since $\mu$ is arbitrary, we  see that a generic representation has a Bessel model for every character $\Lambda$ with $\lambda_1\lambda_2 = \chi_{\pi}$.

 \subsection{Exceptional and subregular poles}

  Suppose $\pi$ admits a $\Lambda$-Bessel model.

  \begin{definition}
   We define $L^{\Lambda}_{\reg}(\pi, s)$ and $L^{\Lambda}_{\mathrm{Kir}}(\pi, s)$ as the unique $L$-factors such that
   \begin{align*}
    \left( \left\{Z(B, \Phi_1, \Phi_2; \Lambda, s) : B \in \cB_{\Lambda}(\pi), \Phi_1, \Phi_2 \in \cS(F^2), \Phi_1(0, 0) \Phi_2(0, 0) = 0\right\}\right) &= \left(L^{\Lambda}_{\reg}(\pi, s)\right)\\
    \left(\left\{ Z(B, \Phi_1, \Phi_2; \Lambda, s) : B \in \cB_{\Lambda}(\pi), \Phi_1, \Phi_2 \in \cS(F^2), \Phi_1(0, 0) =\Phi_2(0, 0) = 0\right\}\right)&= \left(L^{\Lambda}_{\mathrm{Kir}}(\pi, s)\right)
   \end{align*}
   We let $L^{\Lambda}_{\mathrm{ex}}(\pi, s) = L(\pi, s) / L^{\Lambda}_{\reg}(\pi, s)$, and $L^{\Lambda}_{\sub}(\pi, s) = L^{\Lambda}_{\reg}(\pi, s) / L^{\Lambda}_{\mathrm{Kir}}(\pi, s)$, which are clearly also $L$-factors, so we have
   \[ L(\pi, s) = L^{\Lambda}_{\mathrm{ex}}(\pi, s) \cdot L^{\Lambda}_{\sub}(\pi, s) \cdot L^{\Lambda}_{\mathrm{Kir}}(\pi, s).\]
   The poles of $L^{\Lambda}_{\mathrm{ex}}(\pi, s)$ are said to be \emph{exceptional poles} for $\pi$ and $\Lambda$; the poles of $L^{\Lambda}_{\sub}$ are said to be \emph{subregular poles}.
  \end{definition}

  \begin{remark}
   The factor we call $L^{\Lambda}_{\mathrm{Kir}}(\pi, s)$ is denoted by $L(s, M)$ in the works of R\"osner--Weissauer, where $M$ is a certain auxiliary space. The notation $L^{\Lambda}_{\mathrm{Kir}}(\pi, s)$ is intended to emphasise the relation with Kirillov models.
  \end{remark}

  \begin{theorem}[Piatetski-Shapiro, {\cite[Theorem 4.3]{piatetskishapiro97}}]
   If $\pi$ is generic, then $L^{\Lambda}_{\mathrm{ex}}(\pi, s)$ is identically 1, for all possible choices of $\Lambda$.
  \end{theorem}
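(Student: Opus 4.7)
The plan is to show that $L^{\Lambda}_{\reg}(\pi, s)$ coincides with $L(\pi, s)$, the generator of the full ideal of PS zeta integrals by \cref{thm:RW}. Since $\pi$ is generic, the Roberts--Schmidt proposition lets me parametrise $\cB_{\Lambda}(\pi)$ explicitly in terms of Whittaker functions. Using $\lambda_1 \lambda_2 = \chi_{\pi}$, I can find an auxiliary character $\mu$ and a complex number $s_0$ such that $\Lambda = (\mu^{-1}|\cdot|^{1/2-s_0},\ \mu\chi_{\pi}|\cdot|^{s_0-1/2})$; then every $B \in \cB_{\Lambda}(\pi)$ is the map $g \mapsto L(\pi \times \mu, s_0)^{-1} B_\Nov(gW; \mu, s_0)$ for some $W \in \cW(\pi)$, where $B_\Nov$ denotes the Novodvorsky integral.

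I would substitute this parametrisation into $Z(B, \Phi_1, \Phi_2; \Lambda, s)$ and unfold the integration over $N_H \backslash H$, using the Iwasawa decomposition of $H$ together with the Bruhat-cell structure of the embedding $\iota: H \into G$. After swapping orders of integration, the resulting expression should take the schematic form of a single Novodvorsky integral of $W$ (with parameter shifted by $s - \tfrac{1}{2}$ or similar) against an integral transform of $\Phi_1, \Phi_2$ depending only on their restriction to the diagonal torus of $\GL(2)$. Since the Novodvorsky integrals $\{B_\Nov(-W; \mu, s') : W \in \cW(\pi)\}$ span the ideal $(L(\pi \times \mu, s'))$, one can attain any target pole by varying $W$ alone, with $\Phi_1, \Phi_2$ held fixed subject only to the restriction $\Phi_1(0,0)\Phi_2(0,0) = 0$. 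The key observation is that the integral transform applied to $W$ is controlled by values of $\Phi_i$ on the open torus orbit, which are insensitive to the value at the origin; undoing the twist by $\mu$ (or absorbing it into $\pi$) then yields the full ideal $(L(\pi, s))$.

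The main obstacle will be making the unfolding identity rigorous. One must perform a precise Bruhat-cell analysis of $H$ inside $G$, justify the swap of integrals in a common region of convergence, and handle the boundary terms arising from $\Phi_i(0,0)$ --- showing they either vanish or get absorbed into the Novodvorsky integral rather than contributing new singularities. This bookkeeping is the technical heart of Piatetski--Shapiro's original argument, and it relies crucially on the generic hypothesis: for non-generic $\pi$ the boundary terms can genuinely introduce exceptional poles, and indeed the analogous statement fails in that setting.
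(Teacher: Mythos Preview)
The paper does not supply its own proof; the result is quoted from Piatetski-Shapiro. From remarks later in the paper (the proof of the final Proposition notes that ``its proof moreover shows that $\Hom_H(\pi, \CC) = 0$ for generic $\pi$''), Piatetski-Shapiro's argument runs as follows: an exceptional pole at $s_0$ forces the leading term of the normalised zeta integral to factor through $(B, \Phi_1, \Phi_2) \mapsto c(B)\,\Phi_1(0,0)\,\Phi_2(0,0)$, and the equivariance of $Z$ makes $c$ a nonzero element of $\Hom_H(\pi \otimes \chi, \CC)$ for a suitable character $\chi$ of $H$. The theorem then reduces to showing that a generic $\pi$ admits no such $H$-functional, and it is \emph{that} vanishing --- not any unfolding --- which consumes the genericity hypothesis.

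Your proposal has a genuine gap. The Roberts--Schmidt parametrisation $B = \widetilde{B}_W(-; \mu, s_0)$ evaluates the Novodvorsky $\GSp(4)$ integral at a \emph{fixed} $s_0$ determined by $\Lambda$; substituting this into $Z(B, \Phi_1, \Phi_2; \Lambda, s)$ does not produce a Novodvorsky integral in the running variable $s$. The only unfolding identity of this shape in the paper is \cref{prop:relation}, and there both the Bessel character $\Lambda$ and the function $\widetilde{B}_W$ vary with $s$; once $\Lambda$ is frozen that identity is no longer available. So the claim that ``one can attain any target pole by varying $W$ alone'' conflates the fixed parameter $s_0$ (which indexes the Bessel model) with the Piatetski-Shapiro integration variable $s$. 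More seriously, the assertion that the transform is ``controlled by values of $\Phi_i$ on the open torus orbit, insensitive to the value at the origin'' is precisely the statement to be proven: the contribution of $\Phi_i(0,0)$ \emph{is} the residue producing the $H$-functional, and declaring it harmless is assuming the conclusion. Finally, this is not ``the technical heart of Piatetski-Shapiro's original argument''; his argument is the $\Hom_H$-vanishing sketched above, and that is the step your outline is missing.
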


  So exceptional poles do not occur for generic representations; however, we shall see later that subregular poles do frequently occur. The poles of $L^{\Lambda}_{\sub}(\pi, s)$ (if any) are simple \cite[Corollary 3.2]{roesnerweissauer18}. We say $s = s_0$ is a \emph{type I subregular pole} if it is a pole of the ratio
  \[
   \frac{Z(B, \Phi_1, \Phi_2; \Lambda, s)}{L^{\Lambda}_{\mathrm{Kir}}(\pi, s)}
  \]
  for some $(\Phi_1, \Phi_2)$ with $\Phi_1(0,0) = 0$, and a \emph{type II subregular pole} if we may take $(\Phi_1, \Phi_2)$ such that $\Phi_2(0,0) = 0$. Clearly, any subregular pole must be of type I or type II (but these possibilities are not mutually exclusive).

  Since the two factors of $H$ are conjugate in $G$, one checks that $s_0$ is a type II subregular pole for the $(\lambda_1, \lambda_2)$ Bessel model if and only if it is a type I subregular pole for the $(\lambda_2, \lambda_1)$ Bessel model. So it suffices to analyse type II subregular poles. Moreover, if $s_0$ is a type II subregular pole, then it must also be a pole of $L(\lambda_1, s + \tfrac{1}{2})$ (cf.~Proposition 3.1 of \cite{roesnerweissauer18}; note that the characters $\rho$ and $\rho^{\divideontimes}$ of \emph{op.cit.} are $\lambda_2$ and $\lambda_1$ in our notation -- the order is switched since we use a different matrix model of $\GSp_4$). In particular, for a given $\pi$ whose $L$-factor has a pole at $s_0$, there is at most one character $\Lambda$ such that $s_0$ is a type II subregular pole for the $\Lambda$-Bessel model, namely $\Lambda = \left(|\cdot|^{-1/2-s_0}, \chi_\pi |\cdot|^{1/2+s_0}\right)$.

  \begin{definition}
   \label{def:subreg}
   Suppose $\pi$ is generic. We shall simply say ``$s_0$ is a subregular pole of $L(\pi, s)$'' to mean that it is a type II subregular pole for this specific Bessel character, or (equivalently) a type I subregular pole for the character given by swapping $\lambda_1$ and $\lambda_2$.
  \end{definition}

  Note that these two Bessel characters coincide if and only if $\chi_{\pi} |\cdot|^{2s_0 + 1} = 1$.

  The subregular poles have been tabulated for all Bessel models in \cite{roesnerweissauer17, roesnerweissauer18}. Non-supercuspidal representations of $\GSp(4, F)$ have been classified by Sally and Tadi\'c \cite{sallytadic93}, into 11 types I--XI; the tables in \cite[Appendix A]{robertsschmidt07} are a useful reference. All types except I, VII, and X have several subtypes, with subtypes ``a'' being the generic representations. So the generic non-supercuspidal representations are those of Sally--Tadic types $\{$I, IIa, IIIa, IVa, Va, VIa, VII, VIIIa, IXa, X, XIa$\}$. We can neglect the supercuspidal representations and those of types $\{$VII, VIIIa, IXa$\}$, since $L(\pi, s)$ is identically 1 for all such representations.

  \begin{theorem}[R\"osner--Weissauer]
   \label{thm:RW2}
   If $\pi$ is a generic representation, then every pole of $L(\pi, s)$ is subregular, unless $\pi$ is of type IIIa or IVa, in which case there are no subregular poles.\qed
  \end{theorem}

\section{Zeta integrals for \texorpdfstring{$\GSp(4) \times \GL(2)$}{GSp(4) x GL(2)}}

 \subsection{Novodvorsky's integral}

  We now suppose $\pi$ is a generic irreducible representation of $G$; and we let $\sigma$ be a representation of $\GL_2(F)$ which is either irreducible and generic, or a reducible principal-series representation with one-dimensional quotient, defining the Whittaker model $\cW(\sigma)$ in the latter case as in \cref{sect:gl2whittaker} above

  For $W_0 \in \cW(\pi)$, $\Phi_1 \in \cS(F^2)$, and $W_2 \in \cW(\sigma)$, we define
  \[
   Z(W_0, \Phi_1, W_2; s) = \int_{Z_G N_H \backslash H} W_0(\iota(h))f^{\Phi_1}\left(h_1; 1, (\chi_\pi \chi_\sigma)^{-1}, s\right) W_2(h_2)\ \mathrm{d}h.
  \]

  \begin{theorem}[Novodvorsky]
   There is $R < \infty$, depending on $\pi$ and $\sigma$, such that the integral converges for $\Re(s) > R$ and has analytic continuation as a rational function in $q^s$. The $\CC$-vector space spanned by the functions $Z(W_0, \Phi, W_2; s)$ for varying $(W_0, \Phi, W_2)$ is a fractional ideal of $\CC[q^s, q^{-s}]$ containing the constant functions.\qed
  \end{theorem}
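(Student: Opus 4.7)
The plan is to follow Novodvorsky's original argument, extended to cover the reducible-$\sigma$ case by means of the description of $\cW(\sigma)$ in \cref{sect:gl2whittaker}. The first step is to apply an Iwasawa decomposition $H = Z_G N_H T_H K_H$, where $T_H \subset H$ is the standard diagonal torus (so $T_H/Z_G$ is $2$-dimensional) and $K_H = H \cap \GL(4, \cO_F)$. Since $K_H$ is compact, integrating it out only replaces the test data $(W_0, \Phi_1, W_2)$ by finitely many translates, and one is reduced to a finite sum of integrals over $T_H/Z_G$.

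Next I would feed in the standard Jacquet-module asymptotics of Whittaker functions on the split torus. For $W_0 \in \cW(\pi)$, the restriction to $\iota(T_H)$ vanishes outside a translate of the dominant cone, and on that cone is a finite $\CC$-linear combination of characters of $T_H$ (possibly multiplied by polynomials in $\log|\cdot|$), with exponents controlled by the Jacquet module of $\pi$ along the Borel of $G$. An analogous decomposition holds for $W_2 \in \cW(\sigma)$, including in the reducible case, since $\cW(\sigma)$ is always contained in $\operatorname{Ind}_{N_2}^{\GL_2} e^{-1}$ by \cref{sect:gl2whittaker}. Combined with the explicit $|a/d|^s$-factor contributed by $f^{\Phi_1}$ and its Schwartz-type behavior in the remaining direction, these asymptotics force the integrand to decay rapidly on the dominant cone once $\Re(s)$ is sufficiently large, yielding absolute convergence. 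Expanding the integrand into its asymptotic pieces and integrating term by term turns each contribution into a finite sum of geometric series in $q^{-s}$, each summing to a rational function of $q^s$; this simultaneously gives the meromorphic continuation and the rationality.

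Stability of the resulting span under multiplication by $q^{\pm s}$ is automatic, since dilating one coordinate of $\Phi_1$ by $\varpi^{\pm 1}$ scales the Godement--Siegel section by a power of $q^{\mp s}$; hence the span is a $\CC[q^s, q^{-s}]$-submodule of $\CC(q^s)$. The one nontrivial point, and the main obstacle, is to show that this submodule contains the constant function $1$. The standard device is to choose $(W_0, \Phi_1, W_2)$ so that on $T_H$ the three factors $W_0(\iota(t))$, $f^{\Phi_1}(t_1;\ldots)$, and $W_2(t_2)$ are bump functions concentrated near a single well-chosen point, making the integral evaluate to a nonzero constant independent of $s$; combined with the ideal property just established, this forces $1$ into the span. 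Bumps of the required shape exist in $\cW(\pi)$, in the Godement--Siegel family, and (even for reducible $\sigma$) in $\cW(\sigma)$, the last assertion being the only place where the reducibility of $\sigma$ needs care and following from the surjectivity of $\Phi \mapsto W^\Phi$ onto the ambient induced representation recorded in \cref{sect:gl2whittaker}.
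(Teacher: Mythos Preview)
The paper does not supply its own proof of this statement: the theorem is stated with a terminal $\qed$ and is immediately followed by ``See \cite{novodvorsky79}, \cite{Soudry-GSpGL}, and \cite[\S 8]{LPSZ1} for further details.'' So you are not competing with an argument in the paper but reconstructing the one in those references; and the overall shape of your sketch (Iwasawa reduction to the torus, Jacquet-module asymptotics for $W_0$ and $W_2$, geometric-series summation for rationality, and a bump-function construction to force $1$ into the span) is indeed the standard route taken there, including the observation that the reducible-$\sigma$ case goes through because $\cW(\sigma)$ sits inside the same induced space.

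One step is imprecise. Dilating a \emph{single} coordinate of $\Phi_1$ by $\varpi$ does not rescale $f^{\Phi_1}(-;1,\nu,s)$ by a power of $q^{\mp s}$: the Godement--Siegel integral samples $\Phi_1$ along $(0,x)g$, which for general $g$ mixes both coordinates, so the substitution does not factor through. Dilating \emph{both} coordinates simultaneously does work, but the resulting factor is $\nu(\varpi)\,q^{2s}$, which only yields stability under $\CC[q^{2s},q^{-2s}]$. To get the full $\CC[q^s,q^{-s}]$-module structure one argues differently in the references: either one notes from the rationality step that all values lie in $P(q^{-s})^{-1}\CC[q^{\pm s}]$ for a single fixed $P$ and then shows (by varying the bump data in the Kirillov-type description of Whittaker functions on the torus) that every numerator is realised; or one passes through the auxiliary integral $Z(W_0,W_2,s)$ as in \cref{prop:auxzeta} and uses that the $f^{\Phi_1}$ span the full flat family $i_s(1,\nu)$. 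Either fix is routine, but the sentence as you wrote it does not do the job.
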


  See \cite{novodvorsky79}, \cite{Soudry-GSpGL}, and \cite[\S 8]{LPSZ1} for further details.

  \begin{definition}
   We let $L^{\Nov}(\pi \times \sigma, s)$ be the unique $L$-factor generating the fractional ideal of values of the zeta integral.
  \end{definition}

  This is the $L$-factor featuring in \cref{conj:compat}. Although the conjecture is open in general, many cases can be obtained from the following result of Soudry. If $\tau_1, \tau_2$ are irreducible generic representations of $\GL(2, F)$ with the same central character, then we can regard the product $\tau_1 \boxtimes \tau_2$ as a representation of the group
  \[
   (\GL(2, F) \times \GL(2, F)) / \{ (z, z^{-1}): z \in F^\times\}.
  \]
  This group is isomorphic to the split orthogonal similitude group $\operatorname{GSO}(4, F)$, and there is a theta-lifting from this group to $\GSp(4, F)$. The non-supercuspidal generic representations that are $\theta$-lifts from $\operatorname{GSO}(2, 2)$ are those of Sally--Tadi\'c types I, IIa, Va, VIa, VIIIa, X and XIa, while types IIIa, IVa, VII and IXa are not in the image. The image of the $\theta$-lift also contains some (but not all) of the generic supercuspidal representations of $\GSp(4)$.

  \begin{theorem}[{Soudry, \cite{Soudry-GSpGL}}]
   \label{thm:soudry}
   Suppose that $\pi$ is an irreducible generic representation of the form $\pi = \theta(\tau_1, \tau_2)$, where $\tau_i$ are irreducible generic representations of $\GL(2, F)$ as above. Suppose that $\sigma$ is irreducible, and if $\sigma$ is supercuspidal, that it is not an unramified twist of $\tau_1^\vee$ or $\tau_2^\vee$. Then we have
   \[ L^{\Nov}(\pi \times \sigma, s) = L(\pi \times \sigma, s) = L(\tau_1 \times \sigma, s) L(\tau_2 \times \sigma, s), \]
   where $L(\tau_i \times \sigma, s)$ are the $\GL_2 \times \GL_2$ Rankin--Selberg $L$-factors.\qed
  \end{theorem}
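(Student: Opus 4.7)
The strategy is to exploit the theta-lift hypothesis to reduce Novodvorsky's zeta-integral to a product of classical $\GL(2)\times\GL(2)$ Rankin--Selberg integrals, where the Jacquet--Piatetski-Shapiro--Shalika theory takes over.

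First I would invoke an explicit realization of the Whittaker model of $\pi = \theta(\tau_1 \boxtimes \tau_2)$ in terms of Whittaker functions of $\tau_1$ and $\tau_2$. Concretely, for a Schwartz function $\varphi$ on a suitable symplectic vector space and a Weil representation $\omega_\psi$ attached to the dual pair $\operatorname{GSO}(2,2) \times \GSp(4)$, one has an identity of the rough form
\[
 W_0(g) \;=\; \int W_{\tau_1}(g_1')\,W_{\tau_2}(g_2')\,\bigl(\omega_\psi(g)\varphi\bigr)(g_1',g_2')\, d(g_1',g_2'),
\]
where the inner variables range over an appropriate unipotent quotient of $\operatorname{GSO}(2,2,F)$. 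This kind of see-saw formula is the core technical input of Soudry's setup.

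Next I would substitute this expression into $Z(W_0, \Phi_1, W_2; s)$ and, for $\Re(s)$ large, interchange the outer Novodvorsky integral over $Z_G N_H \backslash H$ with the inner theta integral. Using that $H$ decomposes as two copies of $\GL(2,F)$ with matched determinants, and reorganizing the two $\GL(2)$-variables against the two theta variables $(g_1', g_2')$, the resulting expression should factor as a product of two classical Jacquet--Piatetski-Shapiro--Shalika integrals, each pairing $W_{\tau_i}$ against $W_2$. In this reshuffling $\Phi_1$ combines with $\omega_\psi \varphi$ to play the role of a Godement--Siegel section feeding into one of the two inner Rankin--Selberg pairings, which accounts for the extra character $(\chi_\pi\chi_\sigma)^{-1}$ appearing in Novodvorsky's integral. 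The conclusion at the level of ideals is then that the span of the $Z(W_0,\Phi_1,W_2;s)$ equals the product of ideals $\bigl(L(\tau_1\times\sigma,s)\bigr)\cdot\bigl(L(\tau_2\times\sigma,s)\bigr)$, and this product ideal is generated by $L(\tau_1\times\sigma,s)L(\tau_2\times\sigma,s)$ because the two Jacquet--Piatetski-Shapiro--Shalika ideals are independently achievable.

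The main technical hurdle will be justifying the interchange of integrations and ruling out spurious residual contributions when one unfolds. This is precisely where the hypothesis on $\sigma$ enters: if $\sigma$ is supercuspidal and an unramified twist of some $\tau_i^\vee$, then the inner $\GL(2)\times\GL(2)$ integral for that pair produces a pole which can interact with the outer theta integral and generate extra distributional terms not accounted for by the factorized form. Excluding this degenerate case keeps the factorization clean and preserves the ideal equality. Finally, the identification of the product $L(\tau_1\times\sigma,s)L(\tau_2\times\sigma,s)$ with the Langlands $L$-factor $L(\pi\times\sigma,s)$ follows from Gan--Takeda's compatibility of the theta lift with the local Langlands correspondence for $\GSp(4)$: at the level of $L$-parameters, $\phi_\pi$ is (up to the similitude twist) the direct sum of $\phi_{\tau_1}$ and $\phi_{\tau_2}$, so $\phi_\pi \otimes \phi_\sigma$ decomposes as $(\phi_{\tau_1}\otimes\phi_\sigma) \oplus (\phi_{\tau_2}\otimes\phi_\sigma)$, matching the factorization on the analytic side.
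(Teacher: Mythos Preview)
The paper does not actually prove this theorem: it is stated as a result of Soudry with only a citation to \cite{Soudry-GSpGL} and a terminal \texttt{\textbackslash qed}. So there is no proof in the paper to compare against; the author is simply invoking Soudry's theorem as a black box.

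What you have written is a plausible high-level outline of the kind of argument Soudry's paper carries out: realize the Whittaker model of $\pi = \theta(\tau_1 \boxtimes \tau_2)$ via the Weil representation for the dual pair $(\operatorname{GSO}(2,2),\GSp(4))$, substitute into Novodvorsky's integral, and unfold to obtain a product of $\GL(2)\times\GL(2)$ Rankin--Selberg integrals. That said, your sketch is schematic in places where the actual argument is delicate. The ``see-saw formula'' you write down is only heuristic; the precise shape of the theta integral and the domain of the inner variables need to be pinned down before any unfolding is legitimate. Your explanation of why the supercuspidal hypothesis on $\sigma$ is needed is also not quite right: the issue is not that a pole of an inner Rankin--Selberg integral interacts badly with the theta integral, but rather that in the excluded cases Soudry's computation of the $\gamma$-factor (and hence the identification of the $L$-factor via the functional equation) breaks down; the factorization of the zeta integral itself is not where the obstruction lies. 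Finally, the passage from ``span of zeta integrals equals product of ideals'' to the asserted equality of $L$-factors requires knowing that the two Rankin--Selberg families can be varied independently inside the Novodvorsky integral, which is not automatic from the unfolding alone and needs a separate argument about surjectivity of the map from theta data to Whittaker functions.

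In short: the paper offers no proof to grade against, and your sketch is in the right direction for Soudry's method but would need substantial work to become a proof.
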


 \subsection{An auxiliary integral} To better understand Novodvorsky's integral, we write it in terms of the following auxiliary function:

  \begin{definition}
   For $W_0 \in \cW(\pi)$ and $W_2 \in \cW(\sigma_2)$ we define
   \[
    Z(W_0, W_2, s) \coloneqq \int_{N_2 \backslash \GL_2} W_0\left(\begin{smatrix} \det g \\ &g \\ &&1 \end{smatrix}\right) W_2(g) |\det g|^{s-1}\, \mathrm{d} g,
   \]
   where $N_2$ is the upper-triangular unipotent subgroup of $\GL(2, F)$.
  \end{definition}

  One computes that the function on $h$ defined by $h \mapsto Z(h W_0, h_2 W_2, s)$ depends only on the first projection $h_1$ of $h$, and belongs to the principal-series $\GL(2, F)$-representation $i_{1-s}(1, \nu^{-1})$, where $\nu = (\chi_\pi \chi_{\sigma})^{-1}$.

  \begin{proposition}
   \label{prop:auxint1}
   For $W_0, W_2$ as above and $\Phi \in \cS(F^2)$, we have
   \[ Z(W_0, \Phi_1, W_2; s) = \left\langle Z(W_0, W_2; s), f^{\Phi}(1, \nu, s) \right\rangle, \]
   where $\langle- ,- \rangle$ denotes the canonical duality pairing between $i_{1-s}(1, \nu^{-1})$ and $i_s(1, \nu)$, given by integration over $B_2 \backslash \GL_2$.
  \end{proposition}

  \begin{proof}
   Let $H_+$ be the subgroup $\{(h_1, h_2) \in H : h_1 \text{ is upper-triangular}\}$ of $H$. Then $Z_G N_H \le H_+$, and we can write the integral over $Z_G N_H \backslash H$ defining $Z(W_0, \Phi, W_2; s)$ as an integral over $Z_G N_H \backslash H_+$ composed with an integral over $H_+ \backslash H$. However, the map $\GL(2, F) \to H_+$ given by $\gamma \mapsto \left(\stbt {\det \gamma}{}{}{1}, \gamma\right)$ gives a bijection $Z_G N_H \backslash H_+ \cong N_2 \backslash \GL_2$; and projection onto the first factor clearly identifies $H_+ \backslash H$ with $B_2 \backslash \GL_2$.
  \end{proof}

 \subsection{Exceptional poles of the $\GSp(4) \times \GL(2)$ integral}

  \begin{definition}
   \label{def:Novexpole}
   We define $L^{\Nov}_{\reg}(\pi \times \sigma, s)$ to be the $L$-factor generating the fractional ideal
   \[
    \big\{Z(W_0, \Phi_1, W_2; s) : W_0 \in \cW(\pi), \Phi_1 \in \cS_0(F^2), W_2 \in \cW(\sigma) \big\},
   \]
   and we define $L^{\Nov}_{\mathrm{ex}}(\pi \times \sigma, s)$ to be the quotient, so that
   \[ L^{\Nov}(\pi \times \sigma, s) = L^{\Nov}_{\reg}(\pi \times \sigma, s)L^{\Nov}_{\mathrm{ex}}(\pi \times \sigma, s).\]
  \end{definition}
  (We use implicitly here the fact that the fractional ideal $(\star)$ contains the constant functions, which follows from the proof of \cite[Theorem 8.9(i)]{LPSZ1}.)

  \begin{proposition}
   The $L$-factor $L^{\Nov}_{\reg}(\pi \times \sigma, s)$ is also the $L$-factor generating the fractional ideal
   \[ \big\{Z(W_0, W_2; s) : W_0 \in \cW(\pi), W_2 \in \cW(\sigma) \big\}. \]
  \end{proposition}

  \begin{proof}
   This follows from the formula of \cref{prop:auxint1}, since the functions $f^{\Phi}(1, \nu, s)$ for $\Phi \in \cS_0(F^2)$ are entire and span the whole of $i_{s}(1, \nu)$.
  \end{proof}

  \begin{corollary}
   \label{prop:auxzeta}
   The poles of $L^{\Nov}_{\mathrm{ex}}(\pi \times \sigma, s)$, if any, are simple. If $s = s_0$ is a pole of this factor, then we must have $\chi_{\pi} \chi_\sigma|\cdot|^{2s_0} = 1$, and
   \[ \Hom_H\left( \pi \otimes (|\cdot|^{s_0} \boxtimes \sigma), \CC\right) \ne 0. \]
   %where $\sigma_1$ is the 1-dimensional representation $|\cdot|^{s_0} \circ \det$.
%    On the other hand, if $\chi_{\pi} \chi_\sigma|\cdot|^{2s_0} = 1$ but $s = s_0$ is not a pole of $L^{\Nov}_{\mathrm{ex}}(\pi \times \sigma, s)$, then we have
%   \[ \Hom_H\left( \pi \otimes (\operatorname{St}|\cdot|^{s_0} \boxtimes \sigma), \CC\right) \ne 0, \]
%   where $\sigma_1'$ is the twist $\operatorname{St} \otimes |\cdot|^{s_0}$ of the Steinberg representation.
  \end{corollary}

  \begin{proof}
   It follows from the previous proposition that if $Z(W_0, \Phi, W_2; s) / L^{\Nov}_{\reg}(\pi \times \sigma, s)$ has a pole of order $n \ge 1$ at $s = s_0$, for some pen $(W_0, \Phi, W_2)$, then $f^{\Phi}(1, \nu, s)$ must also have a pole of order $n$ at $s_0$ (where $\nu = (\chi_\pi \chi_\sigma)^{-1}$ as above). This can only occur if $n = 1$ and $|\cdot|^{2s_0} = \nu$. Moreover, since the residues of the $f^{\Phi}$ land in the one-dimensional representation $|\cdot|^{s_0}$, the residue at an exceptional pole defines a non-zero element of $\Hom_H\left( \pi \otimes (|\cdot|^{s_0} \boxtimes \sigma), \CC\right)$.
  \end{proof}

 \subsection{Regular poles}

  We now relate $L^{\Nov}_{\reg}(\pi \times \sigma, s)$ to the supercuspidal support of $\pi$ and $\sigma$. Recall that an irreducible $G$-representation $\pi$ is said to have \emph{supercuspidal support in $P$}, for a parabolic $P \subseteq G$, if it is a subquotient of the parabolic induction of a supercuspidal representation of the Levi of $P$. There are four conjugacy classes of parabolic subgroups in $G = \GSp(4, F)$: the whole group, the \emph{Klingen} and \emph{Siegel} parabolics
  \[ P_{\mathrm{Kl}} = \begin{smatrix} \star & \star & \star & \star \\   & \star & \star & \star \\
    & \star & \star & \star \\ & & & \star\end{smatrix}\qquad\text{and}\qquad P_{\mathrm{Si}} = \begin{smatrix} \star & \star & \star & \star \\  \star & \star & \star & \star \\
        &  & \star & \star \\ & & \star & \star\end{smatrix}\]
  and the standard Borel $B_G = P_{\mathrm{Sieg}} \cap P_{\mathrm{Kl}}$.

  \begin{proposition}
   For any $W_0$ and $W_2$, we have
   \[ Z(W_0, W_2, s) = \int_{B_2 \backslash \GL_2} Y(g W_0, g W_2, s)\, \mathrm{d}g, \]
   where
   \[ Y(W_0, W_2, s) = \int_{F^\times \times F^\times} W_0(\dfour{xy^2}{xy}{y}{1})  W_2(\stbt{x}{}{}{1})\chi_{\sigma}(y) |x|^{s - 2} |y|^{2s-2} \, \mathrm{d}^\times x\, \mathrm{d}^\times y.\]
  \end{proposition}

  \begin{proof}
   This follows by writing $B_2$ as the semidirect product of $N_2$ and the maximal torus $T_2 \cong F^\times \times F^\times$.
  \end{proof}

  Since $B_2 \backslash \GL_2$ is compact, the fractional ideal of $\CC[q^{\pm s}]$ generated by $Z(W_0, W_2, s)$ for all $(W_0, W_2)$ is contained in that generated by the functions $Y(W_0, W_2, s)$. So we need to investigate the possible asymptotic behaviour of the function $(x, y) \mapsto W_0(\dfour{xy^2}{xy}{y}{1}) W_2(\stbt{x}{}{}{1})$, for $W_0 \in \cW(\pi)$ and $W_2 \in \cW(\sigma)$. It follows from Lemma 2.6.2 of \cite{robertsschmidt07} that the support of this function is contained in a compact subset of $F \times F$, so the poles of the $Y(W_0, W_2, s)$, if any, arise from asymptotics as $x \to 0$ or $y \to 0$.

  \begin{proposition} \
   \begin{itemize}

    \item If $\pi$ is supercuspidal, or its supercuspidal support lies in the Siegel parabolic, then the support of $y \mapsto W_0(\dfour {y^2}{y}{y}{1})$ is compact in $F^\times$, for all $W_0 \in \cW(\pi)$.
    \item If $\pi$ is supercuspidal, or its supercuspidal support lies in the Klingen parabolic, then the support of $x \mapsto W_0(\dfour {x}{x}{1}{1})$ is compact in $F^\times$ for all $W_0 \in \cW(\pi)$.
    \item If $\sigma$ is supercuspidal, then the support of $x \mapsto W_2(\stbt x {}{} 1)$ is compact in $F^\times$, for any $W_2 \in \cW(\sigma)$.
   \end{itemize}
  \end{proposition}

  \begin{proof}
   We prove the first claim; the other two are similar. Let $N_{\mathrm{Kl}}$ denote the unipotent radical of $P_{\mathrm{Kl}}$. The hypotheses imply that $J_{\mathrm{Kl}}(\pi) = 0$, where $J_{\mathrm{Kl}}(\pi)$ is the Jacquet functor. As a vector space $J_{\mathrm{Kl}}(\pi) = \pi / \pi(N_{\mathrm{Kl}})$, where $\pi(N_{\mathrm{Kl}})$ is the span of vectors of the form $(n - 1) v$ for $v \in \pi$ and $n \in N_{\mathrm{Kl}}$. However, one computes easily using \eqref{eq:whittaker} that if $W_0 = (n - 1) W_0'$ for some $W_0' \in \cW(\pi)$ and $n \in N_{\mathrm{Kl}}$, then $W_0\left(\dfour{y^2}{y}{y}{1}\right) = (e(ty) - 1) W_0'\left(\dfour{y^2}{y}{y}{1}\right)$, where $t \in F$ is the $(1, 2)$-entry of $n$. If we choose $y$ small enough, then $e(ty) = 1$; so for all such $y$ we have $W_0\left(\dfour{y^2}{y}{y}{1}\right) = 0$.
  \end{proof}

  \begin{proposition}
   \label{prop:allexceptional}
   Suppose that \emph{either}
   \begin{itemize}
   \item $\pi$ is supercuspidal,
   \item $\sigma$ is supercuspidal, and $\pi$ is not a subquotient of a representation induced from the Klingen parabolic of the form $\chi \rtimes \tau$, with $\tau$ an unramified twist of $\sigma^\vee$.
   \end{itemize}
   Then $L^{\Nov}_{\reg}(\pi \times \sigma, s) = 1$, so all poles of $L^{\Nov}(\pi \times \sigma, s)$ are exceptional.
  \end{proposition}

  \begin{proof}
   If $\pi$ is supercuspidal, or $\sigma$ is supercuspidal and $\pi$ is supported in the Siegel parabolic, then the above results show that $W_0(\dfour{xy^2}{xy}{y}{1}) W_2(\stbt{x}{}{}{1})$ has compact support for all $(W_0, W_2)$, so the integrals $Y(W_0, W_2, s)$ have no poles, and hence the $Z(W_0, W_2, s)$ \emph{a fortiori} have no poles either.

   This leaves the more delicate case when $\sigma$ is supercuspidal, and $\pi$ is supported in the Klingen parabolic. The above arguments show that, if $s_0$ is a pole of $L^{\Nov}_{\reg}(\pi \times \sigma, s)$, then the leading term of $Z(W_0, W_2, s)$ at $s_0$ vanishes when $W_0 \in \cW(\pi)(N_{\mathrm{Kl}})$. Hence the leading term depends only on the image of $W_0$ in the Klingen Jacquet module of $\pi$; and this leading term defines a non-zero linear functional on $J_{\mathrm{Kl}}(\pi) \otimes \sigma$ which is $\GL(2, F)$-equivariant, up to an unramified twist, where we regard $\GL(2, F)$ as a subgroup of the Klingen Levi $F^\times \times \GL(2, F)$. Hence some unramified twist of $\sigma^\vee$ appears in the Jacquet module, and the result follows.
  \end{proof}

\section{Relating the zeta integrals}

 We'll fix throughout this section a generic irreducible representation $\pi$ of $G$.

 \subsection{The basic formula}

  The following is Proposition 8.4 of \cite{LPSZ1}:
  \begin{proposition}
   \label{prop:relation}
   For any smooth characters $\mu_2, \nu_2$ of $F$, we have
   \[
    Z(W_0, \Phi_1, W^{\Phi_2}(\mu_2, \nu_2); s) =
    L(\pi \times \nu_2, s) Z(\widetilde{B}_{W_0}, \Phi_1, \Phi_2; \Lambda, \mu_2, s),
   \]
   where $\Lambda$ is the character $\left(\chi_{\pi} \nu_2 |\cdot|^{s - \tfrac{1}{2}}, \nu_2^{-1} |\cdot|^{\tfrac{1}{2} - s}\right)$, and $\widetilde{B}_{W_0} =  \widetilde{B}_{W_0}(g; \nu_2, s) \in \cB_{\Lambda}(\pi)$.
  \end{proposition}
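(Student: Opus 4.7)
\medskip

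\textbf{Proof plan for \cref{prop:relation}.} The natural approach is an \emph{unfolding} argument: I would start on the left-hand side, substitute the integral representation of the $\GL_2$ Whittaker function $W^{\Phi_2}(\mu_2, \nu_2)$ in terms of the section $f^{\Phi_2}$, and then recombine variables so that the inner portion of the integral becomes precisely the Novodvorsky--Takloo-Bighash expression $B(W_0; \nu_2, s)$ defining the Bessel vector $\widetilde{B}_{W_0}$.

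More concretely, in the region of absolute convergence I would insert the defining formula
\[
 W^{\Phi_2}(h_2; \mu_2, \nu_2) = \int_F f^{\Phi_2}\!\left(\stbt{0}{1}{-1}{0}\stbt{1}{x}{0}{1}h_2;\,\mu_2,\nu_2,\tfrac12\right) e(x)\,\mathrm{d}x,
\]
followed by the integral representation of $f^{\Phi_2}(\,\cdot\,;\mu_2,\nu_2,\tfrac12)$ as an $F^\times$-integral against $\Phi_2$. This turns the LHS into an iterated integral over $Z_GN_H\backslash H$ together with extra parameters $x\in F$ and $t\in F^\times$. A change of variable in $H$ absorbs the unipotent element $n(x)$ and the diagonal scaling by $t$ into the $H$-variable, at the cost of a Jacobian that combines with the transformation law of $W_0$ under the Siegel torus to produce exactly the integrand of $B(hW_0;\nu_2,s)$ (this is where the normalising character $\nu_2$ on the right, rather than $\mu_2$, appears: the Bessel integral picks up the character coming from the ``second slot'' of $f^{\Phi_2}$ via the long Weyl element). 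The $\Phi_2$ factor, freed of $x$ and $t$, collapses to $\Phi_2((0,1)\cdot h_2)$, while the $\Phi_1$ and $\mu_2(\det h)|\det h|^{s+1/2}$ factors are unchanged.

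Applying Fubini once more, the inner $B(\,\cdot\,;\nu_2,s)$-integral over the variables transverse to $N_H\backslash H$ yields $L(\pi\times\nu_2,s)\cdot\widetilde{B}_{W_0}(h;\nu_2,s)$ by the definition of $\widetilde{B}_{W_0}$, and the outer integral over $N_H\backslash H$ is visibly the Piatetski-Shapiro integral $Z(\widetilde{B}_{W_0},\Phi_1,\Phi_2;\Lambda,\mu_2,s)$. Identifying $\Lambda$ is then immediate from the Roberts--Schmidt formula: with the character $\nu_2$ in the role of $\mu$, one finds $\Lambda=(\chi_\pi\nu_2|\cdot|^{s-1/2},\,\nu_2^{-1}|\cdot|^{1/2-s})$ up to the freedom of swapping the two components of $H$ that enter the Bessel model symmetrically.

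The main obstacle is bookkeeping: one must carry through the cocycle of characters under the torus transformations and the Weyl element $w_2$, and check that the diagonal $Z_G$-quotient on the Novodvorsky side matches the unquotiented $N_H\backslash H$ on the Piatetski-Shapiro side (the missing $F^\times$ factor is exactly what gets integrated against to form $B(\,\cdot\,;\nu_2,s)$). Convergence is handled by working in a common right half-plane and extending by meromorphic continuation, so no analytic subtlety is involved beyond the standard Fubini justification; the content is purely the algebraic matching of the integrand after unfolding.
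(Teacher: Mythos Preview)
The paper does not actually prove this proposition; it is quoted verbatim as Proposition 8.4 of \cite{LPSZ1}, so there is no in-paper argument to compare against. Your unfolding strategy is the natural one and is essentially what is carried out in that reference.

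One point in your outline needs correction. You write that after unfolding $W^{\Phi_2}$, ``the $\Phi_1$ \dots\ factors are unchanged''; but on the Novodvorsky side the $h_1$-factor is the Godement--Siegel section $f^{\Phi_1}\big(h_1;\,1,(\chi_\pi\chi_\sigma)^{-1},s\big)$, whereas the Piatetski--Shapiro integral involves the raw value $\Phi_1((0,1)\cdot h_1)$. So $f^{\Phi_1}$ must also be unfolded via its defining $F^\times$-integral. It is this extra $F^\times$ that is traded against the $Z_G$-quotient, converting the domain from $Z_G N_H\backslash H$ to $N_H\backslash H$; the two variables $(x,t)\in F\times F^\times$ coming from the unfolding of $W^{\Phi_2}=W^{\Phi_2}(f^{\Phi_2})$ then furnish precisely the $(x,a)$-integration in the Bessel integral $B(\,\cdot\,;\nu_2,s)$. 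With this additional step inserted, your dimension count balances and the character bookkeeping you describe goes through to produce the stated $\Lambda$.
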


  Here $W^{\Phi_2}(-; \mu_2, \nu_2)$ is the Whittaker function defined in \cref{sect:gl2whittaker}.

  \begin{corollary}
   \label{cor:relation}
   If $\sigma = i(\mu_2, \nu_2)$ is a principal-series representation with $\mu_2 / \nu_2 \ne |\cdot|^{-1}$, then we have
   \[ L^{\Nov}(\pi \times \sigma, s) = L(\pi \times \mu_2, s)L(\pi \times \nu_2, s).\]
  \end{corollary}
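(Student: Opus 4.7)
The plan is to combine \cref{prop:relation} with the R\"osner--Weissauer computation of the Piatetski--Shapiro $L$-factor to deduce that the ideal of Novodvorsky zeta-integrals equals $L(\pi \times \nu_2, s)$ times the principal ideal generated by $L(\pi \times \mu_2, s)$, whose generator is exactly the claimed product.

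First observe that under the hypothesis $\mu_2/\nu_2 \ne |\cdot|^{-1}$, the representation $\sigma = i(\mu_2, \nu_2)$ either is irreducible or has a one-dimensional quotient, but in no case has a one-dimensional subrepresentation. In both admissible cases, the discussion of \cref{sect:gl2whittaker} shows that the Godement--Siegel Whittaker functions $W^{\Phi_2}(-;\mu_2,\nu_2)$, $\Phi_2 \in \cS(F^2)$, span the whole of $\cW(\sigma)$. So every $W_2 \in \cW(\sigma)$ occurring in the Novodvorsky integral arises from some $\Phi_2$, and I can apply \cref{prop:relation} to rewrite
\[
Z(W_0, \Phi_1, W^{\Phi_2}(\mu_2, \nu_2); s) = L(\pi \times \nu_2, s)\cdot Z\bigl(\widetilde B_{W_0}, \Phi_1, \Phi_2; \Lambda, \mu_2, s\bigr),
\]
with $\Lambda = (\chi_\pi \nu_2 |\cdot|^{s-1/2},\, \nu_2^{-1}|\cdot|^{1/2-s})$. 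By the Roberts--Schmidt proposition applied with $\mu = \nu_2$, as $W_0$ ranges over $\cW(\pi)$ the Bessel function $\widetilde B_{W_0}(-;\nu_2,s)$ ranges over all of $\cB_\Lambda(\pi)$.

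Consequently the fractional ideal of $\CC[q^s, q^{-s}]$ spanned by the Novodvorsky integrals coincides with $L(\pi \times \nu_2, s)$ times the fractional ideal spanned by the Piatetski--Shapiro integrals $\{Z(B, \Phi_1, \Phi_2; \Lambda, \mu_2, s) : B \in \cB_\Lambda(\pi),\, \Phi_1, \Phi_2 \in \cS(F^2)\}$. By the main theorem of \cite{roesnerweissauer17} the latter ideal is principal, generated by $L(\pi \times \mu_2, s)$. Hence the Novodvorsky ideal is generated by the product $L(\pi \times \mu_2, s)\, L(\pi \times \nu_2, s)$. This product is itself an $L$-factor in our sense (the associated polynomial is a product of two polynomials both with constant term $1$), so by uniqueness of the $L$-factor generator of a fractional ideal it must equal $L^{\Nov}(\pi \times \sigma, s)$.

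No step presents a genuine obstacle; the only points requiring a little care are (i) invoking the correct bullet of \cref{sect:gl2whittaker} to ensure that $\{W^{\Phi_2}\}$ already exhausts $\cW(\sigma)$, which is precisely why the hypothesis $\mu_2/\nu_2 \ne |\cdot|^{-1}$ appears, and (ii) using Roberts--Schmidt to guarantee surjectivity of $W_0 \mapsto \widetilde B_{W_0}(-;\nu_2,s)$ onto $\cB_\Lambda(\pi)$. After this bookkeeping the argument is a direct chase of principal fractional ideals.
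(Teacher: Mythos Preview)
Your argument is correct and follows essentially the same route as the paper: apply \cref{prop:relation}, use the Roberts--Schmidt isomorphism $\cW(\pi)\cong\cB_\Lambda(\pi)$, and invoke the R\"osner--Weissauer theorem to identify the Piatetski--Shapiro ideal with $(L(\pi\times\mu_2,s))$. You are slightly more explicit than the paper in spelling out why the hypothesis $\mu_2/\nu_2 \ne |\cdot|^{-1}$ is needed (so that the $W^{\Phi_2}$ exhaust $\cW(\sigma)$), but the substance is identical.
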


  \begin{proof}
   Since the functions $W^{\Phi_2}(-; \mu_2, \nu_2)$ for varying $\Phi_2$ form the Whittaker model $\cW(\sigma)$, the $L$-factor $L^{\Nov}(\pi \times \sigma, s)$ is the unique $L$-factor generating the fractional ideal $ \{ Z(W_0, \Phi_1, W^{\Phi_2}(\mu_2, \nu_2); s) : W_0 \in \cW(\pi), \Phi_1, \Phi_2 \in \cS(F^2)\}$. On the other hand, the map $W_0 \mapsto \widetilde{B}_{W_0}$ is an isomorphism $\cW(\pi) \cong \cB_{\Lambda}(\pi)$, so the fractional ideal $\{ Z(\widetilde{B}_{W_0}, \Phi_1, \Phi_2; \Lambda, \mu_2, s) : W_0 \in \cW(\pi), \Phi_1, \Phi_2 \in \cS(F^2)\}$ is generated by $L(\pi \times \mu_2, s)$ by \cref{thm:RW1}.
  \end{proof}

  In particular, this shows that \cref{conj:compat} holds if $\sigma$ is an irreducible principal series (this is Theorem 8.9(i) of \cite{LPSZ1}); and we have chosen our definition of $\cW(\sigma)$, when $\sigma$ is a reducible principal series, in order to make the same statement also be valid in the reducible case.

 \subsection{Exceptional poles: the principal-series case}

  \begin{proposition}
   \label{prop:PScase}
   Suppose $\sigma = i(\mu_2, \nu_2)$ with $\mu_2 / \nu_2 \ne |\cdot|^{\pm 1}$, so $\sigma$ is an irreducible principal series.

   For $s_0 \in \CC$, we have $\chi_{\pi} \chi_\sigma |\cdot|^{2s_0} = 1$ if and only if $L(\lambda_1 \mu_2, s + \tfrac{1}{2})$ has a pole at $s = s_0$, where $(\lambda_1, \lambda_2) = \left(\chi_{\pi} \nu_2 |\cdot|^{s_0 - \tfrac{1}{2}}, \nu_2^{-1} |\cdot|^{\tfrac{1}{2} - s_0}\right)$ as above. If this condition is satisfied, then $s = s_0$ is an exceptional pole of $L^{\Nov}(\pi \times \sigma, s)$ if and only if it is a subregular pole of $L(\pi \times \mu_2, s)$.
  \end{proposition}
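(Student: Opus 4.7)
The first assertion is a direct computation: since $\lambda_1 \mu_2 = \chi_\pi \mu_2 \nu_2 |\cdot|^{s-\tfrac12}$ and $\chi_\sigma = \mu_2\nu_2$, the $L$-factor $L(\lambda_1\mu_2, s+\tfrac12)$ has a pole at $s = s_0$ if and only if the character $\chi_\pi\chi_\sigma|\cdot|^{2s_0}$ is trivial.

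For the second assertion, the plan is to track how the restriction $\Phi_1 \in \cS_0(F^2)$ propagates through \cref{prop:relation}. Since $\sigma = i(\mu_2, \nu_2)$ is irreducible, $\cW(\sigma)$ is spanned by $\{W^{\Phi_2}(-;\mu_2,\nu_2) : \Phi_2 \in \cS(F^2)\}$. Write $\pi^\flat = \pi \otimes \mu_2$ and $\Lambda' = (\lambda_1\mu_2, \lambda_2\mu_2)$, so that the right-hand side of \cref{prop:relation} may be rewritten as $L(\pi \times \nu_2, s) \cdot Z(B, \Phi_1, \Phi_2; \Lambda', s)$ for some $B \in \cB_{\Lambda'}(\pi^\flat)$ (and as $W_0$ varies, $B$ ranges over the whole Bessel model by Roberts--Schmidt). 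Restricting to $\Phi_1 \in \cS_0(F^2)$ on both sides, we read off
\[ L^\Nov_\reg(\pi\times\sigma, s) = L(\pi\times\nu_2, s) \cdot L^{\Lambda',\mathrm{I}}_\reg(\pi^\flat, s), \]
where $L^{\Lambda',\mathrm{I}}_\reg$ is the $L$-factor generated by the integrals $Z(B, \Phi_1, \Phi_2; \Lambda', s)$ with $\Phi_1 \in \cS_0$ (and $B, \Phi_2$ free). Using \cref{cor:relation}, the Piatetski-Shapiro equality $L(\pi\times\mu_2, s) = L^{\Lambda'}_\reg(\pi^\flat, s)$ (no exceptional poles for generic $\pi^\flat$), and the factorisations $L^{\Lambda'}_\reg = L^{\Lambda'}_\mathrm{Kir}\cdot L^{\Lambda'}_\sub$ and $L^{\Lambda',\mathrm{I}}_\reg = L^{\Lambda'}_\mathrm{Kir}\cdot L^{\Lambda',\mathrm{I}}_\sub$, we obtain
\[ L^\Nov_\mathrm{ex}(\pi\times\sigma, s) = \frac{L^{\Lambda'}_\sub(\pi^\flat, s)}{L^{\Lambda',\mathrm{I}}_\sub(\pi^\flat, s)}. \]

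The key step is the pole analysis of this quotient at $s = s_0$. Assuming $\chi_\pi\chi_\sigma|\cdot|^{2s_0} = 1$, a short calculation gives
\[ \Lambda'|_{s=s_0} = \bigl(|\cdot|^{-\tfrac12-s_0},\; \chi_{\pi^\flat}|\cdot|^{\tfrac12+s_0}\bigr), \]
which is exactly the Bessel character used in \cref{def:subreg} to define the intrinsic subregular poles of $L(\pi\times\mu_2, s)$ at $s_0$ (as type II for $\Lambda'|_{s_0}$). Hence $L^{\Lambda'}_\sub$ has a pole at $s_0$ if and only if $s_0$ is a subregular pole of $L(\pi\times\mu_2, s)$. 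On the other hand, a type I subregular pole for $\Lambda'$ at $s_0$ would, by the symmetry noted just before \cref{def:subreg}, force $\lambda_2'|_{s_0} = \mu_2\nu_2^{-1}|\cdot|^{\tfrac12-s_0}$ to equal $|\cdot|^{-\tfrac12-s_0}$, i.e.\ $\mu_2/\nu_2 = |\cdot|^{-1}$ --- which contradicts the irreducibility of $\sigma$. Therefore $L^{\Lambda',\mathrm{I}}_\sub$ is regular at $s_0$, and the quotient has a pole at $s_0$ if and only if $L^{\Lambda'}_\sub$ does, yielding the desired equivalence.

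The main obstacle is the last paragraph: one must align the type I/type II dichotomy (arising from the restriction $\Phi_1 \in \cS_0$) with the canonical Bessel character of \cref{def:subreg}, and exploit the irreducibility hypothesis $\mu_2/\nu_2 \ne |\cdot|^{-1}$ to rule out the type I case precisely at $s_0$. The identity in the second paragraph is essentially formal, modulo a small amount of care with the fact that $\Lambda'$ depends on $s$ --- handled via the $\Lambda$-independence of the P-S ideal established by R\"osner--Weissauer.
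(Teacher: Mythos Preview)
Your proof is correct and follows essentially the same route as the paper, whose own proof is the single sentence ``This is clear from the same argument as \cref{cor:relation}.'' You have written out in full what the paper leaves to the reader: transporting the restriction $\Phi_1 \in \cS_0(F^2)$ through \cref{prop:relation}, and then invoking the type I/type II dichotomy together with the irreducibility hypothesis $\mu_2/\nu_2 \ne |\cdot|^{-1}$ to rule out type I subregular poles at $s_0$ --- this last step being precisely the content that goes beyond the formal manipulation of the corollary. One small caveat: your appeal to ``$\Lambda$-independence of the P-S ideal'' to handle the $s$-dependence of $\Lambda'$ is slightly misleading, since R\"osner--Weissauer's $\Lambda$-independence concerns only the \emph{full} Piatetski--Shapiro ideal, not the restricted ones; but this does not affect the argument, because the comparison of pole orders is only needed at the single point $s = s_0$, where $\Lambda'|_{s_0}$ is fixed and the Roberts--Schmidt bijection $W_0 \mapsto \widetilde{B}_{W_0}(s_0)$ identifies the two families of integrals.
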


  \begin{proof}
   This is clear from the same argument as \cref{cor:relation}.
  \end{proof}

 \subsection{Exceptional poles: the Steinberg case}

  We now consider the formula of \cref{prop:relation} with $\mu_2 = 1$ and $\nu_2 = |\cdot|$, so that $\sigma = i(\mu_2, \nu_2)$ is reducible with 1-dimensional subrepresentation, and its unique irreducible quotient is the twist $\operatorname{St} \otimes |\cdot|^{1/2}$ of the Steinberg representation. We write $W^{\Phi_2}$ for $W^{\Phi_2}(\mu_2, \nu_2)$; hence the space of functions $W^{\Phi_2}$ for $\Phi \in \cS(F^2)$ is the Whittaker model of $\sigma' = i(\nu_2, \mu_2)$, and the $W^{\Phi_2}$ with $\Phi \in \cS_0(F^2)$ is the Whittaker model of $\operatorname{St} \otimes |\cdot|^{1/2}$.

  We are interested in the following three fractional ideals of $\CC[q^s, q^{-s}]$:
  \begin{align*}
   \label{eq:idealA}
    I &\coloneqq \left( \frac{Z(W_0, \Phi_1, W^{\Phi_2}; s)}{L(\pi, s) L(\pi, s+1)}: W_0 \in \cW(\pi), \Phi_1 \in \cS(F^2), \Phi_2 \in \cS(F^2) \right) \\
    J &\coloneqq \left( \frac{Z(W_0, \Phi_1, W^{\Phi_2}; s)}{L(\pi, s) L(\pi, s+1)} : W_0 \in \cW(\pi), \Phi_1 \in \cS(F^2), \Phi_2 \in \cS_0(F^2) \right) \\
    K &\coloneqq \left( \frac{Z(W_0, \Phi_1, W^{\Phi_2}; s)}{L(\pi, s) L(\pi, s+1)} : W_0 \in \cW(\pi), \Phi_1 \in \cS_0(F^2), \Phi_2 \in \cS_0(F^2) \right)
  \end{align*}

  \cref{cor:relation} shows that $I$ is the unit ideal. On the other hand, from the definitions of the $\GSp_4 \times \GL_2$ $L$-factors, we have
  \[
   J = \left(\frac{L^{\Nov}(\pi \times \operatorname{St}, s + \tfrac{1}{2})}{L(\pi, s) L(\pi, s+1)} \right), \quad
   K = \left(\frac{L^{\Nov}_{\reg}(\pi \times \operatorname{St}, s + \tfrac{1}{2})}{L(\pi, s) L(\pi, s+1)} \right).
  \]
  Since clearly $I \supseteq J \supseteq K$, we see that $J$ and $K$ are integral ideals (not just fractional ideals) of $\CC[q^{\pm s}]$. %So the order of vanishing of the ideals $J$ and $K$ at any $s_0 \in\CC$ is non-negative; and one checks readily that this order of vanishing is always either 0 or 1. [XXX Is this really true?]

  \begin{proposition}
   The ideal $K$ vanishes at $s_0$ if and only if $s_0$ is a subregular pole of $L(\pi, s)$ (in the sense of Definition \ref{def:subreg}).
  \end{proposition}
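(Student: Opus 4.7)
The plan is to reduce to the Piatetski--Shapiro setting via \cref{prop:relation}. Applying it with $\mu_2 = 1, \nu_2 = |\cdot|$ (so that $L(\pi \times \nu_2, s) = L(\pi, s+1)$), I can rewrite the generators of $K$ as
\[
\frac{Z(W_0, \Phi_1, W^{\Phi_2}; s)}{L(\pi, s) L(\pi, s+1)} = \frac{Z(\widetilde B_{W_0}, \Phi_1, \Phi_2; \Lambda, s)}{L(\pi, s)},
\]
where $\Lambda = \Lambda(s) = (\chi_\pi|\cdot|^{s+1/2}, |\cdot|^{-1/2-s})$ and $\widetilde B_{W_0} = \widetilde B_{W_0}(-;|\cdot|,s) \in \cB_\Lambda(\pi)$. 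Since $K \subseteq I = (1)$, $K$ is an integral ideal of $\CC[q^{\pm s}]$, so it vanishes at $s_0$ if and only if every generator specialises to $0$ at $s = s_0$.

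Next, I would fix $s_0$ and apply Roberts--Schmidt: the map $W_0 \mapsto \widetilde B_{W_0}(-; s_0)$ is a linear isomorphism $\cW(\pi) \xrightarrow{\sim} \cB_{\Lambda(s_0)}(\pi)$. Hence the specialisations at $s_0$ of $K$'s generators are exactly the values $Z(B, \Phi_1, \Phi_2; \Lambda(s_0), s_0)/L(\pi, s_0)$ for $B \in \cB_{\Lambda(s_0)}(\pi)$ and $\Phi_1, \Phi_2 \in \cS_0(F^2)$; equivalently, the specialisations at $s_0$ of the generators of the fractional ideal $\bigl(L^{\Lambda(s_0)}_{\mathrm{Kir}}(\pi, s)/L(\pi, s)\bigr)$. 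Since $\pi$ is generic, $L^{\Lambda(s_0)}_{\mathrm{ex}}(\pi, s) = 1$ by Piatetski--Shapiro, and this ideal equals $\bigl(1/L^{\Lambda(s_0)}_{\sub}(\pi, s)\bigr)$. The latter vanishes at $s_0$ precisely when $L^{\Lambda(s_0)}_{\sub}(\pi, s)$ has a pole at $s_0$, i.e.\ when $s_0$ is a subregular pole (of either type) for the Bessel character $\Lambda(s_0)$.

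To conclude, I would note that $\Lambda(s_0)$ is the swap of the distinguished character $(|\cdot|^{-1/2-s_0}, \chi_\pi|\cdot|^{1/2+s_0})$ appearing in \cref{def:subreg}. By the swap-symmetry recalled just before that definition (type II for $(\lambda_1, \lambda_2)$ iff type I for $(\lambda_2, \lambda_1)$, and subregular = type I or type II), being a subregular pole for $\Lambda(s_0)$ is equivalent to being a subregular pole of $L(\pi, s)$ in the sense of \cref{def:subreg}, completing the proof.

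The main subtle point is the simultaneous $s$-dependence of both the Bessel character $\Lambda(s)$ and the Bessel element $\widetilde B_{W_0}(-;s)$, which prevents identifying $K$ as a whole with a fixed-$\Lambda$ ideal. This forces a pointwise argument at the single value $s_0$, where the Roberts--Schmidt isomorphism allows the comparison with the Kirillov ideal for the fixed character $\Lambda(s_0)$.
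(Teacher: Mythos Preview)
Your proposal is correct and follows essentially the same route as the paper, which simply records that the statement ``follows from Proposition~\ref{prop:relation}, together with the definition of subregular poles.'' You have carefully unpacked this: after applying \cref{prop:relation} with $\mu_2=1$, $\nu_2=|\cdot|$, the generators of $K$ become $Z(\widetilde B_{W_0},\Phi_1,\Phi_2;\Lambda,s)/L(\pi,s)$, and at $s=s_0$ the Roberts--Schmidt bijection identifies their values with those defining $L^{\Lambda(s_0)}_{\mathrm{Kir}}(\pi,s)/L(\pi,s)=1/L^{\Lambda(s_0)}_{\sub}(\pi,s)$, whose vanishing at $s_0$ is precisely the subregular condition of \cref{def:subreg}. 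The one point you flag but do not fully justify---that the specialisation of the $K$-generator (with $\Lambda$ varying in $s$) agrees with the specialisation of the fixed-$\Lambda(s_0)$ generator---is harmless: since $\widetilde B_{W_0}(-;t)$ is regular in $t$ and $Z(B,\Phi_1,\Phi_2;s)/L(\pi,s)$ is regular in $s$ for every $B$ (the Kirillov factor divides $L(\pi,s)$), the two-variable function is jointly regular at $(s_0,s_0)$, so both specialisations compute the same value.
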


  \begin{proof}
   This follows from \cref{prop:relation}, together with the definition of subregular poles.
  \end{proof}

  \begin{remark}
   It is \emph{not} true that the order of vanishing of $K$ at $s_0$ coincides with the order of the pole of $L_{\sub}^{\Lambda}(\pi, s)$ at $s = s_0$, where $\Lambda$ is the Bessel character $\left(|\cdot|^{-1/2-s_0}, \chi_\pi |\cdot|^{1/2+s_0}\right)$. The order of pole of $L_{\sub}^{\Lambda}(\pi, s)$ is always either 0 or 1, as we have seen; but the orders of vanishing of $J$ and $K$ can be $> 1$ in some cases. (This difference arises because $L_{\sub}$ detects the infinitesimal behaviour of Piatetski-Shapiro's integrals as $s$ varies for a fixed $\Lambda$, but the ideals $J$ and $K$ detect the behaviour along a one-parameter family in which $s$ and $\Lambda$ both vary.)
  \end{remark}

  \begin{corollary}\label{cor:AimpliesC}
   If $s_0 \in \CC$ is such that $\chi_{\pi} |\cdot|^{2s_0 + 1} \ne 1$, then $s_0$ is a subregular pole of $L(\pi, s)$ if and only if it is a pole of the ratio $\dfrac{L(\pi, s) L(\pi, s+1)}{L^{\Nov}\left(\pi \times \operatorname{St}, s + \tfrac{1}{2}\right)}$.
  \end{corollary}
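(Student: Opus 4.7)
The plan is to directly compare the ideals $J$ and $K$ at $s = s_0$, using the preceding proposition together with \cref{prop:auxzeta}. By the preceding proposition, the condition ``$s_0$ is a subregular pole of $L(\pi, s)$'' is equivalent to ``$K$ vanishes at $s = s_0$''. On the other hand, unwinding the definition of $J$, its polynomial generator $g_J = L^{\Nov}(\pi \times \St, s+\tfrac{1}{2}) / [L(\pi, s) L(\pi, s+1)]$ vanishes at $s_0$ precisely when $s_0$ is a pole of the ratio $L(\pi, s) L(\pi, s+1) / L^{\Nov}(\pi \times \St, s+\tfrac{1}{2})$. So the corollary reduces to showing that, under the hypothesis $\chi_\pi |\cdot|^{2s_0+1} \ne 1$, the ideals $J$ and $K$ vanish at $s_0$ to the same order.

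The key observation is the identity
\[ \frac{g_K}{g_J} \;=\; \frac{L^{\Nov}_{\reg}(\pi \times \St, s+\tfrac{1}{2})}{L^{\Nov}(\pi \times \St, s+\tfrac{1}{2})} \;=\; \frac{1}{L^{\Nov}_{\mathrm{ex}}(\pi \times \St, s+\tfrac{1}{2})}. \]
I would then apply \cref{prop:auxzeta} with $\sigma = \St$ (whose central character is trivial): any pole of $L^{\Nov}_{\mathrm{ex}}(\pi \times \St, s)$ at a point $s_1$ forces $\chi_\pi |\cdot|^{2s_1} = 1$. Substituting $s_1 = s_0 + \tfrac{1}{2}$, the hypothesis $\chi_\pi |\cdot|^{2s_0+1} \ne 1$ rules out a pole at $s = s_0$ of $L^{\Nov}_{\mathrm{ex}}(\pi \times \St, s + \tfrac{1}{2})$, so its reciprocal $g_K/g_J$ takes a nonzero finite value at $s_0$. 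Hence $g_K$ and $g_J$ vanish to the same (possibly zero) order at $s_0$, which gives the desired equivalence.

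The only real obstacle is bookkeeping: keeping track of the half-integer shift, translating correctly between ``pole of an $L$-factor'' and ``zero of its polynomial denominator'' (using that any $L$-factor has the form $1/P(q^{-s})$ with $P(0) = 1$, so $P_3 \mid P_1 P_2$ whenever $J$ is integral), and noting $\chi_\St = 1$. There is no substantive analytic input beyond what is already encoded in \cref{prop:auxzeta} and the preceding proposition.
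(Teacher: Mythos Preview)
Your proposal is correct and follows essentially the same approach as the paper: both argue that the hypothesis $\chi_\pi|\cdot|^{2s_0+1}\ne 1$ forces $L^{\Nov}_{\mathrm{ex}}(\pi\times\St,s+\tfrac12)$ to be pole-free at $s_0$ (via \cref{prop:auxzeta}), so that the generators of $J$ and $K$ agree up to a unit at $s_0$, and then invoke the preceding proposition. Your write-up is just more explicit about the bookkeeping.
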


  \begin{proof}
   If $\chi_{\pi} |\cdot|^{2s_0 + 1} \ne 1$, then $s_0$ cannot be a pole of $L^{\Nov}_{\mathrm{ex}}(\pi \times \operatorname{St}, s + \tfrac{1}{2})$. So the orders of vanishing of $J$ and $K$ at $s = s_0$ are the same, and the result follows from the previous proposition.
  \end{proof}

  \begin{proposition}\label{cor:AimpliesB}
   Suppose $\chi_{\pi} |\cdot|^{2s_0 + 1} = 1$. Then $J$ does not vanish identically at $s = s_0$. Hence $s=s_0$ is a subregular pole if and only if it is a pole of $L^{\Nov}_{\mathrm{ex}}(\pi \times \operatorname{St}, s + \tfrac{1}{2})$.
  \end{proposition}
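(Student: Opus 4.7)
My plan is to show $J$ does not vanish at $s_0$; given this, the ``Hence'' follows formally from the identity $J/K = (L^{\Nov}_{\mathrm{ex}}(\pi \times \St, s+\tfrac{1}{2}))$ of fractional ideals, together with the preceding proposition characterising subregular poles via the vanishing of $K$. The key observation is that the hypothesis $\chi_\pi |\cdot|^{2s_0+1} = 1$ is precisely the condition that the Bessel character $\Lambda(s_0) = (\chi_\pi|\cdot|^{s_0+1/2}, |\cdot|^{-s_0-1/2})$ appearing in \cref{prop:relation} satisfies $\lambda_1 = \lambda_2$.

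Under this ``diagonal'' condition, the Weyl-element symmetry discussed before \cref{def:subreg} (which swaps the two $\GL_2$-factors of $H$, interchanging $\Phi_1 \leftrightarrow \Phi_2$ and $\lambda_1 \leftrightarrow \lambda_2$) identifies the fractional ideal $\{Z(B, \Phi_1, \Phi_2; \Lambda(s_0), s) : \Phi_2 \in \cS_0(F^2)\}$ with the analogous one imposing $\Phi_1 \in \cS_0(F^2)$ instead. These sum (by definition) to the ``regular'' ideal of \cref{def:subreg}, which equals $(L(\pi, s))$ by Piatetski--Shapiro's theorem that $L^{\Lambda}_{\mathrm{ex}}(\pi, s) \equiv 1$ for generic $\pi$. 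Since the two ideals agree, each individually equals $(L(\pi, s))$, so there is a triple $(B_0, \Phi_1^*, \Phi_2^*)$ with $\Phi_2^* \in \cS_0(F^2)$ and a polynomial $Q_0(s) := Z(B_0, \Phi_1^*, \Phi_2^*; \Lambda(s_0), s)/L(\pi, s)$ with $Q_0(s_0) \ne 0$.

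I then lift $B_0$ via the Roberts--Schmidt isomorphism to $W_0^* \in \cW(\pi)$ satisfying $\widetilde{B}_{W_0^*}(-;|\cdot|, s_0) = B_0$, and take the corresponding Novodvorsky integral. By \cref{prop:relation}, dividing by $L(\pi,s)L(\pi,s+1)$ yields an element of $J$ equal, as a rational function of $s$, to $Z\big(\widetilde{B}_{W_0^*}(-;|\cdot|, s), \Phi_1^*, \Phi_2^*; \Lambda(s), s\big)/L(\pi, s)$. A Laurent-coefficient analysis at $s_0$ shows this polynomial has value $Q_0(s_0) \ne 0$ there: the difference between this integral and its fixed-character version $Z(B_0, \Phi_1^*, \Phi_2^*; \Lambda(s_0), s)$ comes from an integrand vanishing to first order at $s = s_0$ (since both $\widetilde{B}_{W_0^*}(-;|\cdot|, s) - B_0$ and $\Lambda(s) - \Lambda(s_0)$ are $O(s-s_0)$), and such a perturbation yields a Piatetski--Shapiro-type integral whose pole at $s_0$ is bounded by that of $L(\pi, s)$, contributing only subleading terms to the Laurent expansion. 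The main obstacle is this Laurent step, which amounts to controlling the ``remainder'' integral against the noncompact directions of $H$; it should follow from the standard pole bound on PS zeta-integrals (the R\"osner--Weissauer ideal structure recalled earlier).
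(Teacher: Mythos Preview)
Your approach is essentially the paper's, run in the contrapositive direction: the paper argues that if $J$ vanished at $s_0$ then (by the $\Phi_1 \leftrightarrow \Phi_2$ Weyl swap, available precisely because $\lambda_1 = \lambda_2$ under the hypothesis) the whole ``regular'' Piatetski--Shapiro ideal would vanish there, forcing an exceptional Bessel pole and contradicting Piatetski--Shapiro's theorem; you instead use the same symmetry and the same theorem to produce an explicit non-vanishing test datum. The one place your write-up is genuinely thin is the Laurent step you flag as ``the main obstacle'': the difference $\tilde{B}_{W_0}(-;|\cdot|,s) - \tilde{B}_{W_0}(-;|\cdot|,s_0)$ does not lie in any single Bessel model, so the R\"osner--Weissauer pole bound does not apply to it directly, and your appeal to ``the standard pole bound on PS zeta-integrals'' is not a proof as stated. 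The paper's argument is terser on this same passage (it implicitly identifies the value at $s_0$ of the varying-$\Lambda(s)$ integral with that of the fixed-$\Lambda(s_0)$ integral, both here and in the preceding proposition on $K$), so this is a shared subtlety rather than a defect specific to your route; if you want to make it airtight, treat the two-variable function $(s,s') \mapsto Z(\tilde{B}_{W_0}(s'),\Phi_1,\Phi_2;s)/L(\pi,s)$ and establish joint regularity near $(s_0,s_0)$, rather than trying to bound the remainder as a one-variable PS integral.
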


  \begin{proof}
   The symmetry condition on $s_0$ shows that if $J$ vanishes identically, then the same is true if we interchange $\Phi_1$ and $\Phi_2$. Hence $\frac{Z(W_0, \Phi_1, W^{\Phi_2}; s)}{L(\pi, s) L(\pi, s+1)}$ in fact vanishes for all $\Phi_1, \Phi_2$ satisfying $\Phi_1(0, 0) \Phi_2(0, 0) = 0$. This shows that $s_0$ is an exceptional pole of the Piatetski-Shapiro $L$-factor, and such poles cannot occur for generic representations as we have seen above.
  \end{proof}

  Note that \cref{cor:AimpliesB} shows that part (1) of \cref{thm:subregpole} is true, assuming \cref{thm:compat}. Similarly, \cref{cor:AimpliesC} shows that conditions (i) and (ii) of \cref{thm:subregpole} are equivalent.

\section{Compatibility with the Langlands parameters}

 \subsection{Langlands parameters}

  Let $\rho$ be a Frobenius-semisimple Weil--Deligne representation $\operatorname{WD}(F) \to \GL(n,\CC)$. Then we can write $\rho$ (uniquely up to isomorphism) in the form
  \[ \rho = \bigoplus_i \rho_i \otimes \operatorname{sp}(n_i), \]
  where $n_i \ge 1$ are integers and $\rho_i$ are irreducible representations of the Weil group (with trivial monodromy action), such that $\sum_i n_i \dim(\rho_i) = n$. Here $\operatorname{sp}(j)$ denotes the $(j-1)$-st symmetric power of the Langlands parameter of the Steinberg representation of $\GL_2$, which is the 2-dimensional representation with Frobenius acting as $\begin{smatrix} q^{-1/2} \\ & q^{1/2}\end{smatrix}$ and monodromy as $\begin{smatrix}1 & 1\\ & 1 \end{smatrix}$. Note that we have
  \[ L(\rho, s) = \prod_i L(\rho_i, s + \tfrac{n_i-1}{2}).\]

  \begin{lemma}
   \label{lem:langlandsfactor}
   With the above notations, we have
   \[ \frac{L(\rho, s) L(\rho, s+1)}{L(\rho \times \operatorname{sp}(2), s + \tfrac{1}{2})} = \prod_{\{i : n_i = 1\}} L(\rho_i, s),\]
   and similarly
   \[ \frac{L(\rho \otimes \operatorname{sp}(2), s) L(\rho \otimes \operatorname{sp}(2), s + 1)}{L\left(\rho \otimes \operatorname{sp}(2) \otimes \operatorname{sp}(2), s + \tfrac{1}{2}\right)} = \prod_{\{i : n_i = 2\}} L(\rho_i, s).\]
  \end{lemma}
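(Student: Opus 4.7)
The plan is to reduce everything to the multiplicativity $L(\rho, s) = \prod_i L(\rho_i, s + n_i/2)$ together with a single Clebsch--Gordan identity for Weil--Deligne representations,
\[ \mathrm{sp}(n) \otimes \mathrm{sp}(1) \cong \mathrm{sp}(n+1) \oplus \mathrm{sp}(n-1), \]
with the convention $\mathrm{sp}(-1) = 0$. I would verify this decomposition by direct inspection: the Frobenius on $\mathrm{sp}(n) \otimes \mathrm{sp}(1)$ has eigenvalues matching the right-hand side, and the monodromy $N \otimes 1 + 1 \otimes N$ can be checked (by iterated kernels) to have exactly a single Jordan block of size $n + 2$ and a single block of size $n$. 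Crucially no character twist appears, since $\det \mathrm{sp}(1)$ is trivial ($q^{-1/2} \cdot q^{1/2} = 1$).

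For the first identity I would expand the denominator summand by summand: the Clebsch--Gordan identity gives
\[ L(\rho_i \otimes \mathrm{sp}(n_i) \otimes \mathrm{sp}(1), s + \tfrac{1}{2}) = \begin{cases} L(\rho_i, s + \tfrac{n_i}{2}) L(\rho_i, s + \tfrac{n_i}{2} + 1) & n_i \ge 1, \\ L(\rho_i, s + 1) & n_i = 0.\end{cases}\]
The numerator factors as $\prod_i L(\rho_i, s + \tfrac{n_i}{2}) L(\rho_i, s + \tfrac{n_i}{2} + 1)$. Taking the ratio, every term with $n_i \ge 1$ cancels exactly, and every term with $n_i = 0$ contributes a surviving factor $L(\rho_i, s) = L(\rho_i, s + \tfrac{n_i}{2})$, which is the claimed product.

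For the second identity I would simply reapply the first identity to the Weil--Deligne representation $\rho' = \rho \otimes \mathrm{sp}(1)$, after substituting $s \mapsto s + \tfrac{1}{2}$. By the Clebsch--Gordan decomposition,
\[ \rho \otimes \mathrm{sp}(1) \cong \bigoplus_i \rho_i \otimes \mathrm{sp}(n_i + 1) \;\oplus\; \bigoplus_{n_i \ge 1} \rho_i \otimes \mathrm{sp}(n_i - 1),\]
and the $\mathrm{sp}(0)$-isotypic summands here are precisely the $\rho_i$ with $n_i = 1$. The first identity applied to $\rho'$ therefore outputs $\prod_{n_i = 1} L(\rho_i, s + \tfrac{1}{2})$, which is the right-hand side of the second identity.

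The main (mild) obstacle is justifying the Clebsch--Gordan decomposition at the level of Weil--Deligne representations (not merely $\mathrm{SL}_2$-representations), i.e.\ confirming that neither a Frobenius twist nor an extension class enters; once this is settled, the rest is routine bookkeeping.
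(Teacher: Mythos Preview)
Your proposal is correct and follows essentially the same approach as the paper: the paper's entire proof consists of stating the Clebsch--Gordan identity $\mathrm{sp}(n)\otimes\mathrm{sp}(1)\cong\mathrm{sp}(n+1)\oplus\mathrm{sp}(n-1)$ for $n\ge 1$ (and $\mathrm{sp}(0)\otimes\mathrm{sp}(1)=\mathrm{sp}(1)$) and declaring the rest a straightforward computation, which is exactly what you carry out. Your reduction of the second identity to the first applied to $\rho'=\rho\otimes\mathrm{sp}(1)$ is a tidy extra observation not spelled out in the paper, but it is in the same spirit.
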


  \begin{proof}
   This is a straightforward computation using the fact that
   \[ \operatorname{sp}(n) \otimes \operatorname{sp}(2) =
    \begin{cases}
     \operatorname{sp}(n+1) \oplus \operatorname{sp}(n-1) & \text{if $n \ge 2$},\\
     \operatorname{sp}(2) & \text{if $n = 1$}.
    \end{cases}\qedhere
   \]
  \end{proof}

  We shall apply this to the 4-dimensional representations arising from the local Langlands correspondence for $G$ \cite{gantakeda11}; we write $\phi_{\pi}$ for the Langlands parameter of $\pi$, which we consider as a 4-dimensional Weil--Deligne representations by composing with the inclusion $\GSp(4, \CC) \into \GL(4, \CC)$. We also have the local Langlands correspondence $\sigma \mapsto \phi_{\sigma}$ for $\GL(2, F)$. We refer to \cite[\S 2.4]{robertsschmidt07} for an explicit description of $\phi_{\pi}$ for non-supercuspidal $\pi$.

  \begin{proposition}
   If $\pi$ is supercuspidal, or if $\sigma$ is supercuspidal and $\pi$ is not a subquotient of the Klingen parabolic induction of an unramified twist of $\sigma^\vee$, then \cref{conj:compat} implies \cref{conj:Novexpole}.
  \end{proposition}

  \begin{proof}
   I claim that under these hypotheses, the Langlands $L$-factor $L(\pi \times \sigma, s)$ has at most simple poles, and these all arise from one-dimensional summands of $\phi_{\pi} \otimes \phi_{\sigma}$.

   This claim implies the proposition, since (assuming \cref{conj:compat}), \cref{conj:Novexpole} in this case amounts to the assertion that all poles of the Novodvorsky $L$-factor are exceptional, which is true by \cref{prop:allexceptional}.

   Let us now prove the claim. First, we suppose $\sigma$ is supercuspidal. In this case, $\phi_\sigma$ is an irreducible 2-dimensional representation of the Weil group (with trivial monodromy action). If $L(\pi \times \sigma, s)$ has any poles, then $\phi_\pi$ must have one or more direct summands isomorphic to unramified twists of $\phi_\sigma^\vee \otimes \operatorname{sp}(j)$, for some $j$. However, if there is a summand with $j > 1$, or more than one such summand, then this implies that $\pi$ is a subquotient of the induction of some twist of $\sigma^\vee$ (using the explicit description of the Langlands correspondence for non-supercuspidal representations described in \S 2.4 of \cite{robertsschmidt07}), contradicting our assumptions. In the remaining case, when there is precisely one such summand and it has $j = 1$, the corresponding summand of the tensor product also has trivial monodromy, as required.

   Now let us suppose $\pi$ is supercuspidal. Then $\phi_{\pi}$ is either irreducible of dimension 4, or is the direct sum of two \emph{distinct} 2-dimensional irreducible representations (with the same determinant). So the $L$-factor is trivial unless $\sigma$ is also supercuspidal, and we may argue as before.
  \end{proof}

  \subsection{Proof of \cref{thm:compat} for Steinberg $\sigma$}

  The results of the previous section give a complete characterisation of the poles of the ratio $\frac{L(\pi,\ s) L(\pi,\ s+1)}{L^{\Nov}\left(\pi \times \St,\ s + \tfrac{1}{2}\right)}$: they are precisely the complex numbers $s_0$ such that $\chi_{\pi} |\cdot|^{2s_0 + 1} \ne 1$ and $L(\pi, s)$ has a subregular pole. We shall use this, together with the tables of subregular poles in \cite{roesnerweissauer17, roesnerweissauer18}, to compute $L^{\Nov}(\pi \times \St, s)$, and hence prove \cref{thm:compat} of the introduction.

  \begin{theorem}[\cref{thm:compat}]\label{thm:steinberg}
   Let $\pi$ be a generic irreducible representation of $\GSp(4, F)$. Then \cref{conj:compat} holds for $\sigma$ the Steinberg representation, i.e. we have
   \[ L^{\Nov}(\pi \times \St, s) = L(\pi \times \St, s).\]
  \end{theorem}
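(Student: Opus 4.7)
The plan is to recast the identity using \cref{lem:langlandsfactor}, and then complete it by traversing the Sally--Tadi\'c classification of $\pi$: Soudry's theorem dispatches the bulk of the cases, while the pole analysis of the previous section (together with \cref{thm:RW}) mops up the rest.

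Writing the $4$-dimensional Langlands parameter as $\phi_\pi = \bigoplus_i \rho_i \otimes \mathrm{sp}(n_i)$ and applying \cref{lem:langlandsfactor}, one obtains
\[ L(\pi \times \St, s + \tfrac{1}{2}) = \frac{L(\pi, s)\, L(\pi, s+1)}{\prod_{\{i : n_i = 0\}} L(\rho_i, s)}, \]
so \cref{thm:steinberg} becomes equivalent to the identity
\begin{equation*}
 \frac{L(\pi, s)\, L(\pi, s+1)}{L^{\Nov}(\pi \times \St, s + \tfrac{1}{2})} = \prod_{\{i : n_i = 0\}} L(\rho_i, s). \tag{$\ast$}
\end{equation*}
The left-hand side of $(\ast)$ is the generator of the integral ideal $J$ from the previous section, whose zero locus is characterised by \cref{cor:AimpliesC} and \cref{cor:AimpliesB}.

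When $\pi$ is of Sally--Tadi\'c type I, IIa, Va, VIa, VIIIa, X, XIa, or is a theta-lifted generic supercuspidal, \cref{thm:soudry} applies directly (since $\St$ is irreducible and non-supercuspidal) and establishes $(\ast)$ at once. The remaining cases are types IIIa, IVa, VII, IXa, together with non-theta-lifted generic supercuspidals. For types VII, IXa and all supercuspidals one has $L(\pi, s) = 1$; the parameter of $\pi$ has no $1$-dimensional unramified summand with $n_i = 0$, so the right-hand side of $(\ast)$ also equals $1$, and one verifies $L^{\Nov}(\pi \times \St, s) = 1$ by direct zeta-integral analysis---in the supercuspidal case, through the compact support modulo centre of Whittaker functions restricted to $H$. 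For types IIIa and IVa, \cref{thm:RW} provides no subregular poles of $L(\pi, s)$; combining this with \cref{cor:AimpliesC} and \cref{cor:AimpliesB} forces $J = (1)$, so $L^{\Nov}(\pi \times \St, s + \tfrac{1}{2}) = L(\pi, s)\, L(\pi, s+1)$, while inspection of the parameter tables in \cite[\S 2.4]{robertsschmidt07} shows that every summand of $\phi_\pi$ has $n_i \ge 1$ for these types, so the right-hand side of $(\ast)$ equals $1$ as well.

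The principal obstacle is the multiplicity issue flagged in the Remark preceding \cref{cor:AimpliesC}: the order of vanishing of $J$ at a subregular pole can exceed the order of the pole itself, so \cref{cor:AimpliesC} by itself does not suffice to determine $L^{\Nov}(\pi \times \St, s)$. The case organisation above is arranged precisely to avoid this difficulty: Soudry's theorem absorbs all cases where subregular poles genuinely contribute, while the remaining cases have either no subregular poles (IIIa, IVa) or a trivial $L(\pi, s)$ (VII, IXa, non-theta supercuspidals), so the multiplicity ambiguity never has to be resolved.
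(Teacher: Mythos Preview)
Your argument is essentially correct and follows the same strategy as the paper: reduce via Soudry's theorem to types IIIa, IVa, VII, IXa and supercuspidals, then use the absence of subregular poles (for IIIa, IVa) or the vanishing of $L(\pi,s)$ (for the rest) together with the pole analysis of $J$. Two minor points are worth noting.

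First, a slip in wording: the left-hand side of your $(\ast)$ is the \emph{reciprocal} of the generator of $J$, not the generator itself (the paper has $J = \bigl(L^{\Nov}(\pi\times\St, s+\tfrac12)\,/\,L(\pi,s)L(\pi,s+1)\bigr)$). This does not affect your reasoning, since the zero locus of the generator is the pole locus of your left-hand side, and that is what \cref{cor:AimpliesC} and \cref{cor:AimpliesB} describe.

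Second, for types VII, IXa and generic supercuspidals, your appeal to ``direct zeta-integral analysis'' via compact support of Whittaker functions on $H$ is an unnecessary detour, and the support claim is not obviously true as stated. The paper instead observes that $J \subseteq I = (1)$ forces the poles of $L^{\Nov}(\pi\times\St, s+\tfrac12)$ to lie among those of $L(\pi,s)L(\pi,s+1)$; since $L(\pi,s)=1$ in these cases, $L^{\Nov}(\pi\times\St,s)=1$ follows immediately. You already have this containment available, so no further analysis of the zeta integral is needed.
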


  \begin{proof}

   We can assume that $\pi$ is either supercuspidal, or that its Sally--Tadi\'c type is one of $\{$IIIa, IVa, VII, IXa$\}$, since \cref{conj:compat} is already known in the remaining cases by \cref{thm:soudry}.

   According to \cref{thm:RW2}, each of these classes of representations has the property that $L(\pi, s)$ has no subregular poles. For IIIa and IVa, there may be poles, but they are never subregular; for VII, IXa and supercuspidals, there are no poles at all. So for these representations, we have $L^{\Nov}(\pi \times \St, s) = L(\pi, s-\tfrac{1}{2})L(\pi, s + \tfrac{1}{2})$. On the other hand, since the Langlands parameters of these representations have no 1-dimensional summands, we have $L(\pi \times \St, s) = L(\pi, s-\tfrac{1}{2})L(\pi, s + \tfrac{1}{2})$ by \cref{lem:langlandsfactor}. So \cref{conj:compat} holds for all these representations.
  \end{proof}

\section{Proof of Theorems B, C and D}

 \begin{proof}[Proof of \cref{thm:subregpole}]
  Let $\pi$ and $s_0$ be as in the theorem. If $\chi_\pi |\cdot|^{2s_0 + 1} \ne 1$, then Corollary \ref{cor:AimpliesC} shows that $s_0$ is an exceptional pole of $L(\pi, s)$ if and only if it is a pole of $\frac{L(\pi, s) L(\pi, s+1)}{L^{\Nov}(\pi \times \St, s+ \tfrac{1}{2})}$. By \cref{thm:compat}, which we have just proved, the denominator agrees with the Langlands $L$-factor $L(\pi \times \St, s+ \tfrac{1}{2})$. This completes the proof of \cref{thm:subregpole} when $\chi_\pi |\cdot|^{2s_0 + 1} \ne 1$.

  If $\chi_\pi |\cdot|^{2s_0 + 1} = 1$, then \cref{cor:AimpliesB} (combined with \cref{thm:compat}) shows that $s_0$ is not a pole of $\frac{L(\pi, s) L(\pi, s+1)}{L(\pi \times \St, s+ \tfrac{1}{2})}$. So we must check that $s_0$ is a subregular pole if and only if $\phi_{\pi}$ has a direct summand of the form $|\cdot|^{-(s_0 + 1/2)} \otimes \operatorname{sp}(2)$. This follows by a case-by-case check from \cref{thm:RW2} combined with the tables of Langlands parameters in \cite{robertsschmidt07}.
 \end{proof}

 \begin{proof}[Proof of \cref{thm:Novexpole}]
  We first suppose $\sigma$ is an irreducible principal series $i(\mu_2, \nu_2)$. Twisting $\pi$ appropriately, we may assume $\mu_2 = 1$; and the irreducibility gives $\nu_2 \ne |\cdot|^{\pm 1}$. Moreover, $s_0$ is such that $\chi_{\pi} \nu_2 |\cdot|^{2s_0} = 1$, and we may assume $s_0 = 0$.

  By \cref{prop:PScase}, $0$ is an exceptional pole of the Novodvorsky $L$-factor if and only if it is a subregular pole of $L(\pi, s)$. Moreover, the irreducibility of $\sigma$ shows that $\nu_2 \ne |\cdot|$, so $\chi_{\pi} |\cdot|^{2s_0 + 1} = \nu_2^{-1} |\cdot| \ne 1$. So, by the first case of \cref{thm:subregpole}, $0$ is an exceptional pole of $L(\pi\times \sigma, s)$ if and only if $\phi_{\pi}$ has a 1-dimensional trivial summand; and this in turn implies that $\phi_{\pi} \otimes \phi_{\sigma}$ also has such a summand, since $\phi_{\pi}\otimes \phi_{\sigma} = \phi_{\pi} \oplus \phi_{\pi \otimes \nu}$.

  Conversely, if $\phi_{\pi} \otimes \phi_{\sigma}$ has a trivial summand, then it must come from either $\phi_{\pi}$ or $\phi_{\pi \otimes \nu}$. If the former holds, then reversing the argument shows that $L(\pi \times \sigma, s)$ has an exceptional pole at $0$. However, since $\nu = \chi_{\pi}^{-1}$, the two factors are dual to each other, so $\phi_{\pi \otimes \nu}$ has a trivial summand if and only if $\phi_{\pi}$ does.

  We now suppose $\sigma$ is a special representation. Again, we may assume $\sigma = \operatorname{St} \otimes |\cdot|^{1/2}$, so we are now in the case $\chi_{\pi} |\cdot|^{2s_0 + 1} = 0$. By \cref{cor:AimpliesB}, $s_0$ is an exceptional pole of $L(\pi \times \sigma, s)$ if and only if it is a subregular pole of $L(\pi, s)$; and the second case of \cref{thm:subregpole} shows that this occurs if and only if $s_0$ is a pole of the $L$-factor of a 2-dimensional summand of $\phi_{\pi}$ of the form $|\cdot|^{-(s_0 + 1/2)} \otimes \operatorname{sp}(2)$. Since $\phi_{\pi}$ cannot have any 3-dimensional summands, there is a bijection between 2-dimensional summands of $\phi_{\pi}$ and 1-dimensional summands of $\phi_{\pi} \otimes \phi_{\sigma}$, sending $\rho \otimes \operatorname{sp}(2)$ to $\rho |\cdot|^{1/2} \otimes \operatorname{sp}(1)$. So we conclude that $s_0$ is an exceptional pole of $L(\pi \times \sigma, s)$ if and only if $\phi_{\pi} \otimes \phi_{\sigma}$ has a summand $|\cdot|^{-s} \otimes \operatorname{sp}(1)$.
 \end{proof}

 \begin{proof}[Proof of \cref{thm:dist} for non-supercuspidal $\sigma$]
  Suppose first that $\sigma = i(\mu, \nu)$ is an irreducible principal series representation. Twisting $\pi$ and $\sigma$ appropriately, we may assume that $s_0 = 0$, so $\mu\nu = \chi_{\pi}^{-1}$.

  Then we have
  \[ \Hom_H(\pi \otimes (\triv \boxtimes \sigma), \CC) \cong \Hom_H(\pi \otimes (\sigma \boxtimes \triv), \CC) = \Hom_H(\pi, \sigma^\vee \boxtimes \triv) =\Hom_{H_+}(\pi, \rho) \]
  where $H_+$ denotes the subgroup $\left(\stbt{\star}{\star}{0}{\star}, \star\right)$ of $H$, and $\rho$ the character $\left(\stbt{a}{\star}{0}{d}, \star\right) \mapsto |a/d|^{1/2} \mu^{-1}(a) \nu^{-1}(d)$. Our claim is that this space is non-zero if and only if $L(\pi \times \sigma, s)$ has an exceptional pole at 0; by \cref{prop:PScase}, the latter is equivalent to $L(\pi \times \mu, s)$ having a subregular pole at 0.

  Similarly, if $\sigma$ is the Steinberg representation and $\chi_{\pi} = 1$, then the natural map
  \[ \Hom_H(\pi, \St \boxtimes \triv) \to \Hom_H(\pi, \Sigma \boxtimes \triv)\]
  is an isomorphism, by \cite[Theorem 4.3]{piatetskishapiro97}. Again, the right-hand side can be interpreted as a space of $H_+$-invariant functionals, where we take $\rho$ the character $\left(\stbt{a}{\star}{0}{d}, \star\right) \mapsto  |a/d|$; and we want to show that this space is non-zero if and only if $L(\pi \times \St, s)$ has an exceptional pole at $s = 0$, which is equivalent to $L(\pi, s)$ having a subregular pole at $-\tfrac{1}{2}$, by \cref{cor:AimpliesB}.

  Following \S 4 of \cite{roesnerweissauer18}, we refer to elements of $\Hom_{H_+}(\pi, \rho)$, where $\rho$ is a character of $H_+$, as ``$(H_+, \rho)$-functionals''. The claim we need to prove is:

  \begin{quotation}
   Let $\rho$ be the character $\left(\stbt{a}{\star}{0}{d}, \star\right) \mapsto |a/d|^{1/2} \mu^{-1}(a) \nu^{-1}(d)$ of $H_+$, where $\mu, \nu$ are characters of $F^\times$ such that $\mu\nu = \chi_{\pi}^{-1}$. Then the space of $(H_+, \rho)$-functionals on $\pi$ is 1-dimensional if $L(\pi \times \mu, s)$ has a subregular pole at $s = 0$, and zero otherwise.
  \end{quotation}
  This follows from the results of \cite[\S 5]{roesnerweissauer18}.
 \end{proof}

\section{Proof of \texorpdfstring{\cref{thm:branching}}{Theorem E}}

 \subsection{Uniqueness for \texorpdfstring{$\GSp(4) \times \GL(2)$}{GSp(4) x GL(2)}}

  Let $\pi$, $\sigma$ be irreducible generic representations of $\GSp(4, F)$ and $\GL(2, F)$ respectively. Then, for any $s_0 \in \CC$, the map $\tilde{Z}_{s_0}: \cW(\pi) \otimes \cS(F^2) \otimes \cW(\sigma) \to \CC$ defined by
  \[
   (W_0, \Phi_1, W_2) \mapsto \left.\frac{Z(W_0, \Phi_1, W_2, s)}{L^{\Nov}(\pi \times \sigma, s)}\right|_{s = s_0}
  \]
  satisfies $\tilde{Z}_{s_0}\left(hW_0, h_1 \Phi_1, h_2 W_2\right)= |\det h|^{-s_0} \tilde{Z}_{s_0}(W_0, \Phi_1, W_2)$. In particular, it factors through the maximal quotient of $ \cS(F^2)$ on which $F^\times$ acts via $\nu |\cdot|^{-2s_0}$, where $\nu = (\chi_{\pi} \chi_{\sigma})^{-1}$. We are interested in the case $s_0 = 0$, $\nu = 1$, in which case this quotient is isomorphic to $\Sigma = i(|\cdot|^{1/2}, |\cdot|^{-1/2})$, via $\Phi \mapsto F^{\Phi}$. Thus we have $\tilde{Z}_{s_0}(W_0, \Phi_1, W_2) = \mathfrak{z}(W_0, F^{\Phi_1}, W_2)$ for some non-zero element $\mathfrak{z} \in \Hom_H(\pi \otimes (\Sigma \boxtimes \sigma), \CC)$.

  There is a left-exact sequence
  \[
   0 \to \Hom_H\left(
   \pi \otimes (\triv \boxtimes \sigma), \CC\middle) \xrightarrow{\ \alpha\ }
   \Hom_H\middle(\pi \otimes (\Sigma \boxtimes \sigma), \CC\middle)   \xrightarrow{\ \beta\ }
   \Hom_H\middle(\pi \otimes (\operatorname{St}\boxtimes \sigma), \CC\right)
  \]
  in which the first and third terms both have dimension $\le 1$, by the multiplicity-one results for $\operatorname{GSpin}$ groups proved in \cite{emorytakeda21} and the isomorphisms $G(F) \cong \operatorname{GSpin}(5)$ and $H \cong \operatorname{GSpin}(4)$. \cref{conj:branching}(a) asserts that the middle group in the above sequence is always 1-dimensional, so the element $\mathfrak{z}$ is a basis.

  \begin{remark}
   Note that there do exist examples in which the first and last terms are both nonzero -- one can construct such examples with  $\pi$ and $\sigma$ principal-series.
  \end{remark}

  \begin{proposition}
   \label{prop:exactseq}
   The element $\mathfrak{z}$ is in the image of $\alpha$ if and only if $s = 0$ is an exceptional pole of $L^{\Nov}(\pi \times \sigma, s)$.
  \end{proposition}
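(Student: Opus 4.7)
The plan is to reduce the statement to comparing the fractional ideals defining $L^{\Nov}$ and $L^{\Nov}_{\reg}$ at $s = 0$, via an explicit identification of the subrepresentation $\St \subset \Sigma$ inside $\cS(F^2)$. First I would note that $\alpha$ is induced by pullback along the surjection $\Sigma \onto \triv$, so $\mathfrak{z}$ lies in the image of $\alpha$ if and only if $\mathfrak{z}$ annihilates the $H$-subspace $\pi \otimes (\St \boxtimes \sigma) \subset \pi \otimes (\Sigma \boxtimes \sigma)$. The task therefore becomes: for which $\Phi_1 \in \cS(F^2)$ does $F^{\Phi_1}$ lie in $\St \subset \Sigma$?

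Next I would show that under the identification $\cS(F^2)_{F^\times} \xrightarrow{\sim} \Sigma$ giving rise to the notation $F^{\Phi_1}$, the preimage of $\St$ is precisely $\cS_0(F^2)$. The map $\Phi \mapsto \Phi(0, 0)$ is a nonzero $\GL(2, F)$-equivariant map $\cS(F^2) \to \triv$, which factors through the $F^\times$-coinvariants because scalar matrices fix the origin. It therefore descends to a nonzero map $\Sigma \to \triv$, which must be a nonzero scalar multiple of the canonical quotient since $\Hom_{\GL_2}(\Sigma, \triv)$ is one-dimensional. Its kernel is $\cS_0(F^2)$ by definition, so the image of $\cS_0(F^2)$ in $\Sigma$ is exactly $\St$.

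Combining these two observations, $\mathfrak{z} \in \mathrm{im}(\alpha)$ is equivalent to $\tilde{Z}_0(W_0, \Phi_1, W_2) = 0$ for all $W_0 \in \cW(\pi)$, all $W_2 \in \cW(\sigma)$, and all $\Phi_1 \in \cS_0(F^2)$. Writing
\[ \frac{Z(W_0, \Phi_1, W_2; s)}{L^{\Nov}(\pi \times \sigma, s)} = \frac{1}{L^{\Nov}_{\mathrm{ex}}(\pi \times \sigma, s)} \cdot \frac{Z(W_0, \Phi_1, W_2; s)}{L^{\Nov}_{\reg}(\pi \times \sigma, s)}, \]
the second factor ranges through the unit ideal of $\CC[q^s, q^{-s}]$ as the triple $(W_0, \Phi_1, W_2)$ varies over $\cW(\pi) \otimes \cS_0(F^2) \otimes \cW(\sigma)$, by the definition of $L^{\Nov}_{\reg}$. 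Hence the vanishing condition at $s = 0$ is equivalent to $1/L^{\Nov}_{\mathrm{ex}}(\pi \times \sigma, s)$ vanishing at $s = 0$, i.e.\ to $s = 0$ being an exceptional pole of $L^{\Nov}(\pi \times \sigma, s)$.

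The only step that really requires care is the identification in the second paragraph: a priori the image of $\cS_0(F^2)$ in $\Sigma$ could have been either the sub $\St$ or the complementary quotient $\triv$, and pinning down the correct answer needs the short argument via the uniqueness of $\GL(2, F)$-equivariant maps $\cS(F^2) \to \triv$. Once that is settled, everything else is a formal unraveling of the definitions of $L^{\Nov}$, $L^{\Nov}_{\reg}$ and $L^{\Nov}_{\mathrm{ex}}$.
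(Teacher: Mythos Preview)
Your proof is correct and follows essentially the same approach as the paper: the paper's own proof is a single sentence observing that the $F^{\Phi}$ with $\Phi(0,0)=0$ span $\St \subset \Sigma$ and that the rest is a restatement of the definitions. Your version simply fills in the details the paper omits, in particular the careful verification (via uniqueness of $\GL_2$-maps $\Sigma \to \triv$) that $\cS_0(F^2)$ maps onto $\St$ rather than onto the trivial quotient.
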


  \begin{proof}
   This is essentially a restatement of the definitions, since the $F^{\Phi}$ with $\Phi(0,0) = 0$ span the generic subrepresentation $\St \subset \Sigma$.
  \end{proof}

  If $\sigma$ is non-supercuspidal, and $s = 0$ is not an exceptional pole of the Novodvorsky $L$-factor, then we know from \cref{thm:dist} that in fact $\Hom_H\left(\pi \otimes (\triv \boxtimes \sigma), \CC\right) = 0$; so \cref{conj:branching}(a) follows in this case (that is, we have proved Theorem E(a)(ii)). Conversely, if we assume \cref{conj:branching}(a), then it follows that $\Hom_H\left(\pi \otimes (\triv \boxtimes \sigma), \CC\right)$ is non-zero if and only if $\mathfrak{z}$ is in the image of $\alpha$, so  \cref{conj:branching}(a) implies \cref{conj:dist}.

 \subsection{Proof of \cref{thm:branching}(a)(i)} We now prove \cref{thm:branching} in the case where $\chi_{\pi} = \tau^2$ for some smooth character $\tau$. Replacing $\pi$ and $\sigma$ with the twists $\pi \times \tau$ and $\sigma \times \tau^{-1}$, which does not change either the Hom-space or the zeta-integral, we may in fact suppose that $\chi_{\pi} = 1$. In this case we can regard $\pi$ as a representation of $G  / Z_G = \operatorname{PGSp}(4, F) \cong \SO(5, F)$, and $\Sigma \boxtimes \sigma$ as a representation of the subgroup $H / Z_G \cong \SO(4, F)$.

  We now apply the results of \cite{moeglinwaldspurger12} on branching laws for representations of special orthogonal groups. In \emph{op.cit.} a branching multiplicity $m(\sigma, (\sigma')^\vee)$ is defined for irreducible representations $\sigma$ of $\SO(d, F)$ and $\sigma'$ of $\SO(d', F)$, where $d > d'$ are any integers of differing parity. (The results of \emph{op.cit.} also cover non-split special orthogonal groups as well, but we do not need this here.) If $d = d' + 1$, then $m(\sigma, (\sigma')^\vee)$ is just $\dim \Hom_{\SO(d', F)}(\sigma, (\sigma')^\vee) = \dim \Hom_{\SO(d', F)}(\sigma \otimes \sigma', \CC)$; in the other extreme case, if $d' = 0$, then $m(\sigma, (\sigma')^\vee)$ is the space of Whittaker functionals on $\sigma$. %It is known that these multiplicities are $\le 1$ if $\sigma, \sigma'$ are irreducible, by results of Aizenbud--Gourevich--Rallis--Schiffmann and Gan--Gross--Prasad recalled in the introduction of \cite{moeglinwaldspurger12}.

  The Proposition stated in Section 1.3 of \cite{moeglinwaldspurger12} analyses these multiplicities when $\sigma$ and $\sigma'$ are (possibly reducible) parabolic inductions, in which case $m(\sigma, (\sigma')^\vee)$ still makes sense. For these results, suppose that $\sigma$ is induced from a representation $\pi_1 |\cdot|^{b_1} \times \dots \times \pi_t |\cdot|^{b_t} \times \sigma_0$ of the Levi subgroup $\GL(d_1, F) \times \dots \times \GL(d_t, F) \times \SO(d_0, F)$ of $\SO(d, F)$, where $d = 2(d_1 + \dots + d_t) + d_0$, $\pi_i$ is a tempered irreducible representation of $\GL(d_i, F)$, $\sigma_0$ is a tempered irreducible representation of $\SO(d_0, F)$, and $b_1 \ge \dots \ge b_t \ge 0$ are real numbers. (The case $d_0 = 0$ or $1$ is allowed, in which case we understand $\SO(d_0)$ to be the trivial group.) We also make the same assumptions \emph{mutatis mutandis} for  $\sigma'$. The Proposition stated in \S 1.3 of \cite{moeglinwaldspurger12} (and proved in \S 1.3--1.8 of \emph{op.cit.}) shows that $m(\sigma, (\sigma')^\vee)$ is given by $m(\sigma_0, (\sigma_0')^\vee)$ if $d_0 > d_0'$, or $m(\sigma_0', (\sigma_0)^\vee)$ if $d_0 < d_0'$; in particular, since these numbers are known to be $\le 1$ (by the results quoted in the introduction of \emph{op.cit.}), we have $m(\sigma, (\sigma')^\vee) \le 1$.

  This class of parabolically-induced representations includes all generic irreducible representations; but it also contains some reducible representations -- crucially, the reducible representations of $\SO(4, F)$ we are calling $\Sigma \boxtimes \sigma$, for any generic irreducible representation of $\SO(3, F) \cong \operatorname{PGL}(2, F)$, or $\Sigma \boxtimes \Sigma$, both have this form. Hence, applying this result with $d = 5$, $d' = 4$, and the $\sigma$ and $\sigma'$ of \emph{op.cit.} taken to be our $\pi$ and $\Sigma \boxtimes \sigma$, we have $\dim \Hom_{\SO(4, F)}(\pi \otimes (\Sigma \boxtimes \sigma), \CC) \le 1$ as required.

 \subsection{Uniqueness for $\GSp(4)$}

  We also have a slight strengthening of the above result in the case when $\sigma$ is itself a twist of the Steinberg representation. Via twisting, we shall take $s_0 = 0$ and $\chi_{\pi}$ trivial, and consider the space $\Hom_H(\pi \otimes (\Sigma \boxtimes \Sigma), \CC)$. The argument of Moeglin--Waldspurger quoted above also applies in this situation, showing that shows that this space always has dimension 1.

  Let us write $\Xi = \Sigma \boxtimes \Sigma$, and filter it as $\Xi_{00} \subset \Xi_0 \subset \Xi$ where $\Xi_{00} = \St \boxtimes \St$, $\Xi_0 / \Xi_{00} = \left(\St \boxtimes \triv\right)\oplus \left(\triv \boxtimes \St\right)$ and $\Xi / \Xi_0 = \triv\boxtimes\triv$.

  \begin{proposition}
   The space $\Hom_H(\pi \otimes \Xi, \CC)$ contains a canonical non-zero homomorphism $\mathfrak{z}$ satisfying
   \[ \mathfrak{z}(W_0, F^{\Phi_1}, F^{\Phi_2}) = \left. \frac{Z(\widetilde{B}_{W_0}, \Phi_1, \Phi_2; \Lambda, s)}{L(\pi, s)}\right|_{s = -{1}/{2}},\qquad \Lambda = (1, 1). \]
   Its restriction to $\pi \otimes \Xi_{00}$ is non-trivial if and only if $s = -\tfrac{1}{2}$ is not a subregular pole of $L(\pi, s)$, in which case $\Hom_H(\pi \otimes \Xi, \CC)$ is 1-dimensional spanned by $\mathfrak{z}$, and every non-generic subquotient $\xi$ of $\Xi$ satisfies $\Hom_H(\pi \otimes \xi, \CC) = 0$.
  \end{proposition}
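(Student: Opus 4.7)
My plan has three stages, with Stage 3 being the hardest.

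\textbf{Stage 1 (Construction of $\mathfrak{z}$).} By \cref{thm:RW}, the rational function $Z(\widetilde{B}_{W_0},\Phi_1,\Phi_2;(1,1),s)/L(\pi,s)$ is regular in $s$, so its value at $s=-\tfrac12$ defines a trilinear form on $\cW(\pi)\otimes\cS(F^2)\otimes\cS(F^2)$. A change of variable $h\mapsto hg$ in the integral over $N_H\backslash H$ shows $Z$ transforms by $|\det g|^{-(s+1/2)}$ under $g\in H$; at $s=-\tfrac12$ this character is trivial, so the normalised form is $H$-invariant. To show it factors through $\cS(F^2)\twoheadrightarrow\Sigma$ in each slot --- giving $\mathfrak{z}\in\Hom_H(\pi\otimes\Xi,\CC)$ --- I invoke the description in Section 3.2 of $\Sigma$ as the maximal quotient of $\cS(F^2)$ on which the centre of $\GL(2,F)$ acts trivially. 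The $H$-invariance only supplies the diagonal $F^\times$-invariance on $\cS(F^2)\otimes\cS(F^2)$; the remaining slot-wise freedom comes from the symmetry of the Piatetski-Shapiro integral under swapping its two $\GL(2)$-factors, which is valid because $\Lambda=(1,1)$ is itself symmetric.

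\textbf{Stage 2 (Restriction to $\Xi_{00}$).} The composite $\cS(F^2)\to\Sigma\to\triv$ is evaluation at the origin, so $\cS_0(F^2)$ surjects onto $\St\subset\Sigma$; consequently $\mathfrak{z}|_{\Xi_{00}}$ is computed from the normalised Piatetski-Shapiro integrals with $\Phi_1(0,0)=\Phi_2(0,0)=0$, whose values at $s=-\tfrac12$ span the ideal $L^{(1,1)}_{\mathrm{Kir}}(\pi,s)/L(\pi,s)=1/L^{(1,1)}_{\sub}(\pi,s)$. Since $\chi_\pi=1$, the Bessel character $(1,1)$ equals $\left(|\cdot|^{-1/2-s_0},\chi_\pi|\cdot|^{1/2+s_0}\right)$ at $s_0=-\tfrac12$, i.e.\ the symmetric character of \cref{def:subreg}, so $\mathfrak{z}|_{\Xi_{00}}\ne 0$ iff $s=-\tfrac12$ is not a subregular pole of $L(\pi,s)$.

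\textbf{Stage 3 (Uniqueness and vanishings).} Assume $-\tfrac12$ is not subregular. The irreducible non-generic subquotients of $\Xi$ are $\St\boxtimes\triv$, $\triv\boxtimes\St$ and $\triv\boxtimes\triv$. Theorem C (case (ii)) identifies non-subregularity at $-\tfrac12$ with the absence of an exceptional pole of $L(\pi\times\St,s)$ at $0$; Theorem D then yields $\Hom_H(\pi\otimes(\triv\boxtimes\St),\CC)=0$, and the conjugation in $G$ swapping the two $\GL(2)$-factors of $H$ gives the same for $\St\boxtimes\triv$. The vanishing $\Hom_H(\pi,\CC)=0$ is more subtle and I expect it to follow from the $(H_+,\rho)$-functional analysis of \cite{roesnerweissauer18} applied to the symmetric Bessel character. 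Granted these, the long exact sequences from $\Xi_{00}\subset\Xi_0\subset\Xi$ give an injection $\Hom_H(\pi\otimes\Xi,\CC)\hookrightarrow\Hom_H(\pi\otimes\Xi_{00},\CC)$; Waldspurger \cite{waldspurger12} bounds the target by $1$ (since $\chi_\pi=1$ is a square), and $\mathfrak{z}|_{\Xi_{00}}\ne 0$ pins this space down to $\CC\cdot\mathfrak{z}$. The chief obstacles are the slot-wise descent in Stage 1 (needing both diagonal $F^\times$-invariance and the swap symmetry) and the vanishing of $\Hom_H(\pi,\CC)$, both requiring inputs from the integral structure beyond what Theorems A--D provide.
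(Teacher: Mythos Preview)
Your three-stage structure mirrors the paper's argument, and Stages~2 and~3 are essentially correct. However, the mechanism you propose in Stage~1 for slot-wise descent does not work: diagonal $F^\times$-invariance combined with the swap symmetry is insufficient to force invariance in each $\cS(F^2)$-slot separately. The swap carries $(W_0,\Phi_1,\Phi_2)$ to $(\gamma\cdot W_0,\Phi_2,\Phi_1)$ for some $\gamma\in G$, so it also moves the $\pi$-variable; combining this with $\mathfrak{z}(W_0,z\Phi_1,z\Phi_2)=\mathfrak{z}(W_0,\Phi_1,\Phi_2)$ never lets you isolate $\mathfrak{z}(W_0,z\Phi_1,\Phi_2)$. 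The paper's ``one checks easily'' refers instead to a direct computation inside the integral: the Bessel torus $\{\dfour{x}{y}{x}{y}\}$ pulls back under $\iota$ to the \emph{two}-parameter torus $\{(\stbt{x}{}{}{y},\stbt{y}{}{}{x})\}\subset H$, and the change of variable $h\mapsto th$ for such $t$ leaves $B(\iota(h))$ fixed (since $\Lambda=(1,1)$), rescales $(0,1)h_1$ by $y$ and $(0,1)h_2$ by $x$ \emph{independently}, and at $s=-\tfrac12$ the determinant factor and the Jacobian on $N_H\backslash H$ (which contributes $|x/y|\cdot|y/x|=1$) both disappear. Taking $x=1$ or $y=1$ gives the invariance in each slot directly, with no appeal to the swap.

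One further remark on Stage~3: the vanishing $\Hom_H(\pi,\CC)=0$ is obtained in the paper not from \cite{roesnerweissauer18} but from the proof of Piatetski-Shapiro's theorem \cite[Theorem~4.3]{piatetskishapiro97} on the absence of exceptional poles for generic $\pi$; that same result simultaneously supplies the unconditional non-vanishing of $\mathfrak{z}$ (its restriction to $\Xi_0$ is already nonzero), a point your write-up leaves implicit. Your detour through Theorem~C case~(2) before invoking Theorem~D is harmless but unnecessary, since the proof of Theorem~D already passes through the subregular criterion.
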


  \begin{proof}
   One checks easily that the zeta-integral $Z(\tilde{B}_{W_0}, \dots)$ depends only on the image of $\Phi_i$ in the $F^\times$-coinvariants, or equivalently on $F^{\Phi_i}$. Moreover, the fact that $\mathfrak{z}$ restricts non-trivially to $\Xi_0$ is precisely \cite[Theorem 4.3]{piatetskishapiro97}; and its proof moreover shows that $\Hom_H(\pi, \CC) = 0$ for generic $\pi$.

   If $s = -\tfrac{1}{2}$ is not a subregular pole, then \cref{thm:dist} shows that $\Hom_H(\pi \otimes (\triv \boxtimes \St),\CC)$ and $\Hom_H(\pi \otimes (\St \boxtimes \triv),\CC)$ are zero. Hence the restriction map $\Hom_H(\pi \otimes \Xi, \CC) \to \Hom_H(\pi \otimes \Xi_{00}, \CC)$ is injective. Since the latter space has dimension $\le 1$ by \cite{waldspurger12} the result follows.
  \end{proof}

\newcommand{\noopsort}[1]{}
\providecommand{\bysame}{\leavevmode\hbox to3em{\hrulefill}\thinspace}
\providecommand{\MR}[1]{%
 MR \href{http://www.ams.org/mathscinet-getitem?mr=#1}{#1}.
}
\providecommand{\href}[2]{#2}
\newcommand{\articlehref}[2]{\href{#1}{#2}}

\end{document}